\documentclass{amsart}

\newtheorem{theorem}{Theorem}[section]
\newtheorem{lemma}[theorem]{Lemma}
\newtheorem{cor}[theorem]{Corollary}

\theoremstyle{definition}
\newtheorem{definition}[theorem]{Definition}
\newtheorem{example}[theorem]{Example}

\theoremstyle{remark}

\usepackage{latexsym,amsfonts}
\numberwithin{equation}{section}

\def\co{\colon\thinspace}

\newcommand{\nser}{\mathcal{N}} % to get script N for normaliser 
\newcommand{\lnser}{\mathcal{N}^l} 
\newcommand{\rnser}{\mathcal{N}^r} 
\newcommand{\cser}{\mathcal{C}}% to get script C for centraliser
 % to get script Z for centre
\newcommand{\lcent}{\mathcal{Z}^l} 

\DeclareMathOperator{\im}{im}

\DeclareMathOperator{\soc}{Soc}

\DeclareMathOperator{\End}{End}

\DeclareMathOperator{\alg}{alg\langle}

\newcommand{\snr}{\mbox{$\triangleleft\mspace{-1.8mu}\triangleleft_r\medspace$}}

\DeclareMathOperator{\ideq}{\unlhd}

\begin{document}

\title{Engel subalgebras of Leibniz algebras} 
\author{Donald W. Barnes}
\address{1 Little Wonga Rd, Cremorne NSW 2090 Australia}
\email{donwb@iprimus.com.au}
\thanks{This work was done while the author was an Honorary Associate of the
School of Mathematics and Statistics, University of Sydney.}
\subjclass[2000]{Primary 17A32}
\keywords{Leibniz algebras, soluble, nilpotent, Engel subalgebras}

\begin{abstract} Engel subalgebras of finite-dimensional Leibniz algebras are shown to have similar properties to those of Lie algebras.  Using these, it is shown that a left Leibniz algebra, all of whose maximal subalgebras are right ideals, is nilpotent.  A primitive Leibniz algebra is shown to split over its minimal ideal and that all the complements to its minimal ideal are conjugate.   A subalgebra is shown to be a Cartan subalgebra if  and only if it is minimal Engel, provided that the field has sufficiently many elements.  Cartan subalgebras are shown to have a property analogous to intravariance.  
\end{abstract}
\maketitle

\section{Introduction}\label{sec-intro}
Engel subalgebras, so named because of their close connection with Engel's Theorem, have been found useful in the study of Lie algebras.  In this paper, I show that Engel subalgebras of Leibniz algebras have similar properties and use them to prove Leibniz algebra generalisations of some theorems on Lie algebras.  All the algebras considered in this paper are finite-dimensional over a field $F$ which, unless otherwise stated, may be of any characteristic.

In this section, I set out the basic definitions and some basic results on Leibniz algebras.  In Section \ref{sec-nilp}, I set out the basic properties of soluble and nilpotent Leibniz algebras and of their representations.   In Section \ref{sec-eng}, I establish the properties of Engel subalgebras and use them to prove some analogues of known theorems on Lie algebras.  In Section \ref{sec-cartan}, I show that, provided that the field has at least $\dim(A) + 1$ elements, the Cartan subalgebras of the Leibniz algebra $A$ are precisely its minimal Engel subalgebras.  I show that Cartan subalgebras have a property which could be regarded as the Leibniz algebra analogue of the group theory concept of intravariance.  (A subgroup $U$ of a group $G$ is called {\em intravariant} in $G$ if every automorphism of $G$ maps $U$ onto a conjugate subgroup.)

Let $A$ be an algebra over the field $F$.  We denote by $L_a$ the  left multiplication by $a \in A$, thus $L_a(x) = ax$ for all $x \in A$.  Likewise, we denote by $R_a$ the right multiplication by $a$, thus $R_a(x) = xa$.

\begin{definition} A (left) Leibniz algebra is an algebra $A$ for which all the left multiplications are derivations, that is, $$a(bc) = (ab)c + b(ac)$$
for all $a,b,c \in A$. \end{definition}

Observe that $L_{ab} = L_a L_b - L_b L_a$ for all $a,b \in A$.  Thus the $L_a$ form a Lie algebra of linear transformations of $A$.

\begin{theorem}\label{power} Let $A$ be a Leibniz algebra and let $a \in A$.  Let $b$ be a product of $n > 1$ $a$'s.  Then $L_b = 0$, however the product $b$ is associated. \end{theorem}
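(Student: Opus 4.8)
The plan is to induct on the number $n$ of copies of $a$ occurring in the product $b$, with the commutator formula $L_{ab} = L_a L_b - L_b L_a$ noted immediately before the statement as the engine of the whole argument. The point of this formula is that it converts the left multiplication by a product into the commutator of the left multiplications of its two outermost factors; consequently, as soon as one factor is known to have vanishing left multiplication, so does the entire product. For the base case $n = 2$ the only product is $b = aa$, and the formula gives $L_{aa} = L_a L_a - L_a L_a = 0$, which is precisely where the antisymmetry of the commutator is exploited.

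For the inductive step I would assume that $n \geq 3$ and that $L_c = 0$ for every product $c$ of $m$ copies of $a$ with $2 \leq m < n$. Any bracketing of a product $b$ of $n$ copies determines an outermost split $b = b_1 b_2$, where $b_1$ is a product of $k$ copies and $b_2$ a product of $n - k$ copies for some $1 \leq k \leq n-1$. Since $n \geq 3$, the two summands cannot both equal $1$, so at least one of $k$ and $n-k$ is at least $2$; say $b_1$ is a product of two or more $a$'s (the other case is symmetric). By the induction hypothesis $L_{b_1} = 0$, and therefore $L_b = L_{b_1 b_2} = L_{b_1} L_{b_2} - L_{b_2} L_{b_1} = 0$. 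This covers every way in which $b$ can be associated, since the argument depends only on the outermost bracketing.

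The argument is essentially mechanical, so I do not anticipate a genuine obstacle. The only point requiring care is the bookkeeping: the induction must be phrased as strong induction on $n$, and one must observe that for $n \geq 3$ every outermost split of $b$ yields a factor involving at least two $a$'s, to which the hypothesis applies. The case $n = 2$ is the sole place where vanishing is produced directly, from $[L_a, L_a] = 0$; all larger products then inherit the conclusion purely through the commutator identity.
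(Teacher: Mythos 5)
Your proof is correct and follows essentially the same route as the paper: the paper likewise establishes $L_{aa}=0$ from the Leibniz identity (equivalently, the antisymmetry in $L_{ab}=L_aL_b-L_bL_a$) and then handles $n>2$ by splitting $b=uv$ at the outermost bracket and noting that one factor has degree greater than $1$, so the commutator vanishes by induction. Your write-up is just a slightly more explicit strong-induction bookkeeping of the identical argument.
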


\begin{proof}  We have $a(ax) = (aa)x + a(ax)$, so $(aa)x = 0$ for all $x \in A$.  Thus $L_{(aa)} = 0$.

Now suppose $n > 2$.  Then $b = uv$ and $bx = (uv)x = u(vx) - v(ux)$, so $L_b = L_uL_v - L_vL_u = 0$ since at least one of $u,v$ has degree $> 1$.  
\end{proof}

\begin{cor} Let $b \ne 0$ be a power of $a$.  Then $b$ has the form $b=a(a( \dots (aa) \dots ))$. \end{cor}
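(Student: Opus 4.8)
The plan is to prove the slightly stronger statement that \emph{every} product of $n$ copies of $a$, however associated, is either $0$ or the right-normed product $a(a( \dots (aa) \dots ))$; the corollary is then just the nonzero case. I would argue by strong induction on the number $n$ of factors. The cases $n = 1$ and $n = 2$ are immediate, since the only products are $a$ and $aa$ respectively, the latter already being right-normed.

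For the inductive step with $n > 2$, write $b$ according to its outermost multiplication as $b = uv$, where $u$ is a product of $k$ copies of $a$ and $v$ a product of the remaining $n - k$ copies, with $1 \le k \le n-1$. The key dichotomy is on the degree $k$ of the left factor. If $k \ge 2$, then $u$ is a product of more than one $a$, so Theorem \ref{power} gives $L_u = 0$; since $b = uv = L_u(v)$, this forces $b = 0$. Hence, if $b \ne 0$, we must have $k = 1$, that is $u = a$ and $b = av$ with $v$ a product of $n-1$ copies of $a$.

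It remains to handle the right factor $v$. By the induction hypothesis, $v$ is either $0$ or right-normed. If $v = 0$ then $b = av = 0$, contrary to assumption; so $v = a(a( \dots (aa) \dots ))$ has $n-1$ factors, and therefore $b = av$ is the right-normed product of $n$ factors, as required. The argument is short, and the only real point to get right is the reduction step: recognising that any nontrivial bracketing of the left factor makes $L_u$ vanish and hence annihilates the whole product, which is exactly what Theorem \ref{power} supplies. I expect no genuine obstacle beyond making the inductive bookkeeping on the number of factors precise.
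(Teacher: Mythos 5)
Your proof is correct and follows essentially the same route as the paper: decompose $b = uv$ at the outermost multiplication, use Theorem \ref{power} (via $L_u = 0$ for $u$ of degree $> 1$) to force the left factor to be $a$ itself, and apply induction to the right factor. The only cosmetic difference is that you phrase the argument as a dichotomy (product is $0$ or right-normed) while the paper argues directly from $b \ne 0$, which is the same logic in contrapositive form.
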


\begin{proof}  We have $b = uv$ where $u,v$ are non-zero powers of $a$.  By induction  over the degree, $v$ has  the asserted form.  Since $L_u \ne 0$, $u$ has degree $1$, thus   $b = av$ has the asserted  form. \end{proof}

As a result of this, powers of elements are unambiguous.  We can define $a^n$ by setting $a^1 = a$ and $a^{n+1} = aa^n$.    We can similarly define powers $A^n$ of the algebra $A$ by $A^1 = A$ and $A^{n+1} = A A^n$.  By a theorem of Ayupov and Omirov \cite{AyO}, if $b$ is a product of $n$ elements of $A$, however bracketed, then $b \in A^n$.  They also point out that the left centre $\lcent(A) = \{z \in A \mid za=0 \text{ for all } a \in A\}$ is a $2$-sided ideal of $A$, that $a^2 \in \lcent(A)$ for all $a \in A$ and that $A/ \lcent(A)$ is a Lie algebra.   It follows from this that a simple Leibniz algebra is necessarily a Lie algebra.

\begin{definition}  We say that the Leibniz algebra $A$ is abelian if $A^2 = 0$. \end{definition}
Note that if $A = \alg a \rangle$ is the algebra generated by the single element $a$, then $\langle a^n \mid n>1 \rangle$ is an abelian subalgebra and, if $A \ne 0$, then $A^2 \ne A$.  In particular, a $1$-dimensional algebra is abelian.  We write $U \le A$ for $U$ is a subalgebra of $A$ and $U \ideq A$ for $U$ is an  ideal of $A$.

\begin{definition} Let $U$ be a subalgebra of the Leibniz algebra $A$.  The left normaliser of $U$ in $A$ is the subset $\lnser_A(U) = \{a \in A \mid au \in U \text{ for all } u \in U\}$, the right normaliser the subset $\rnser_A(U) = \{a \in A \mid ua \in U \text{ for all } u \in U\}$ and the normaliser is the subset $\nser_A(U) = \{a \in A \mid au \in U \text{ and } ua \in U \text{ for all } u \in U\}$.
\end{definition}

It easily seen that the normaliser $\nser_A(U)$ and the left normaliser $\lnser_A(U)$ of a subalgebra $U$ are subalgebras, but the right normaliser $\rnser_A(U)$ need not be.

\begin{example} Let $A = \langle u, n, k, n^2 \rangle$ with the multiplication given by $un=u, nu=-u+k, un^2 =k, u^2=0, uk=0, nk=-k, n^3=0$ with $ka = n^2a = 0$ for all $a \in A$.  Put $U=\langle u \rangle$.  Then $A$ is a Leibniz algebra, $U$ is a subalgebra and $\rnser_A(U)$ is not a subalgebra of $A$ as $n \in \rnser_A(U)$ but $n^2 \notin \rnser_A(U)$. \end{example}

\begin{definition} A bimodule of a Leibniz algebra $A$ is a vector space $M$ with two bilinear
compositions $am, ma$ for $a \in A$ and $m \in M$ such that \begin{equation*}\begin{split}
a(bm) &= (ab)m + b(am)\\
a(mb) &= (am)b + m(ab)\\
m(ab) &= (ma)b + a(mb)\\ \end{split} \end{equation*}
for all $a,b \in A$ and $m \in M$. \end{definition}
These are precisely the conditions which would be satisfied if $A$ and $M$ were contained in some Leibniz algebra.  A bimodule $M$ is equivalent to a pair $(S,T)$ of linear map into the endomorphism algebra $\End(M)$ of $M$, $T_a(M) = am$ and $S_a(m) = ma$ for all $a \in A$ and $m \in M$.  The maps $(S, T)$ satisfy
\begin{equation*}\begin{split}
T_a \circ T_b &= T_{ab} + T_b \circ T_a\\
T_a \circ S_b &= S_b \circ T_a + S_{ab}\\
S_{ab} &= S_b \circ S_a + T_a \circ S_b\\
\end{split}\end{equation*}
The first of these shows that $M$ is a module for the Lie algebra $T_A = \{T_a \mid a \in A \}$.  Note that, if $A$ is a Lie algebra and $T \co A \to \End(M)$ is a Lie representation, setting either $S = -T$ or $S=0$ makes $M$ into a bimodule.

\section{Solubility and nilpotency} \label{sec-nilp}
\begin{definition} The Leibniz algebra $A$ is called nilpotent if, for some $k$, we have $A^k = 0$. \end{definition}
The following result is proved exactly as is the analogous result for groups and for Lie algebras.
\begin{lemma}\label{lem-normNilp} Suppose $A$ is nilpotent.  Let $U \ne A$ be a subalgebra of $A$.  Then $\nser_A(U) \ne U$. \end{lemma}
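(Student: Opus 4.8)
The plan is to mimic the classical argument for Lie algebras and for groups, using the descending chain of powers $A = A^1 \supseteq A^2 \supseteq \cdots$ in place of the lower central series. Since $A$ is nilpotent, $A^k = 0$ for some $k$, and since $U \ne A$ we have $A^1 = A \not\subseteq U$ while $A^k = 0 \subseteq U$. Hence there is a largest index $i$ with $A^i \not\subseteq U$, and for this $i$ we have $A^{i+1} \subseteq U$. I would then choose any $x \in A^i \setminus U$ and claim that $x \in \nser_A(U)$. This finishes the proof, because $U \subseteq \nser_A(U)$ always holds (for $v \in U$ and $u \in U$ both $vu$ and $uv$ lie in $U$ since $U$ is a subalgebra), so exhibiting a single $x \in \nser_A(U) \setminus U$ already gives $\nser_A(U) \ne U$.

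To verify the claim I must check that $xu \in U$ and $ux \in U$ for every $u \in U$. This is the one point where the Leibniz (as opposed to Lie) setting needs care. The power $A^{i+1}$ is defined as $A A^i$ by \emph{left} multiplication, so $ux \in A A^i = A^{i+1} \subseteq U$ is immediate; but the product $xu$, living a priori in $A^i A$, is not obviously governed by the chain. The remedy is the theorem of Ayupov and Omirov quoted in the introduction \cite{AyO}: any product of $n$ elements of $A$, however bracketed, lies in $A^n$. Writing $x$ as a linear combination of products of $i$ elements of $A$ and using bilinearity of the multiplication, each summand $wu$, with $w$ a product of $i$ elements and $u \in U \subseteq A$, is a product of $i+1$ elements and hence lies in $A^{i+1}$. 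Therefore $xu \in A^{i+1} \subseteq U$, and symmetrically $ux \in A^{i+1} \subseteq U$, establishing $x \in \nser_A(U)$.

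The main obstacle is precisely this asymmetry between left and right multiplication: the definition of the power series builds in control only of left products, so one cannot conclude $xu \in U$ directly from the chain as one would in the Lie case. Invoking the Ayupov--Omirov result restores the needed symmetry and makes the argument go through. I note that nilpotency is used only to guarantee the vanishing power $A^k = 0$, and that the argument does not require knowing in advance that $\nser_A(U)$ is a subalgebra.
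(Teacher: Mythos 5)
Your proof is correct and is exactly the argument the paper has in mind: the paper states this lemma without proof, remarking only that it is ``proved exactly as is the analogous result for groups and for Lie algebras,'' and your argument is that classical one, adapted via the chain $A = A^1 \supseteq A^2 \supseteq \cdots \supseteq A^k = 0$. The one genuinely Leibniz-specific point --- that $xu \in A^iA \subseteq A^{i+1}$ is not automatic because the powers $A^{i+1} = AA^i$ are built from left multiplication only, and must be obtained from the Ayupov--Omirov theorem quoted in the introduction --- is precisely the detail that needs checking, and you identify and handle it correctly.
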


We shall make use of the analogues of Engel's Theorem for Lie algebras of linear transformations and for abstract Lie algebras.   Ayupov and Omirov \cite[Theorem 2]{AyO} prove: 
\begin{theorem}\label{th-absEng} Let $A$ be a finite-dimensional  Leibniz algebra.  Suppose that
$L_a$ is nilpotent for all $a \in A$.  Then $A$ is nilpotent. \end{theorem}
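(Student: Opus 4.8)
The plan is to deduce this abstract form of Engel's theorem from the linear version applied to the Lie algebra of left multiplications. By the observation following the definition of a Leibniz algebra we have $L_{ab} = L_aL_b - L_bL_a$, so $L_A := \{L_a \mid a \in A\}$ is a Lie subalgebra of $\End(A)$. The hypothesis says precisely that every element of $L_A$ is a nilpotent linear transformation of the finite-dimensional space $A$. Thus $L_A \subseteq \End(A)$ is a Lie algebra of nilpotent endomorphisms, which is exactly the setting of the linear version of Engel's theorem.

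First I would invoke that linear version for $L_A$ acting on $A$ (assuming $A \neq 0$, the case $A = 0$ being trivial). It yields a flag
\[
0 = V_0 \subset V_1 \subset \cdots \subset V_n = A
\]
of subspaces with $L_a(V_i) \subseteq V_{i-1}$ for all $a \in A$ and all $i$; equivalently, $A V_i \subseteq V_{i-1}$.

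Next I would tie this flag to the powers $A^k$, which are generated by left multiplication via $A^{k+1} = A A^k$. Since $A = V_n$, we get $A^2 = A V_n \subseteq V_{n-1}$, and, inductively, if $A^k \subseteq V_{n-k+1}$ then $A^{k+1} = A A^k \subseteq A V_{n-k+1} \subseteq V_{n-k}$. Taking $k = n$ gives $A^{n+1} \subseteq V_0 = 0$, so $A$ is nilpotent.

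The real content here sits entirely in the linear Engel theorem, which the excerpt allows us to assume; the reduction above is essentially formal. The only point requiring genuine care is the match between the two descending chains: the filtration produced by Engel's theorem is a filtration of $A$ as a module over the $L_a$, while the definition $A^{k+1} = A A^k$ uses precisely these same left multiplications. It is this coincidence that lets the powers of $A$ slide down the flag one step at a time, and it is why the argument works cleanly for left Leibniz algebras with powers defined by left multiplication.
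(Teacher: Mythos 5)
Your proof is correct, but it takes a genuinely different route from the paper, which in fact offers no proof of its own: it quotes the theorem from Ayupov and Omirov and remarks that Patsourakos derives it as a corollary of the stronger bimodule statement, Theorem \ref{th-linEng}. That derivation runs through Leibniz-specific machinery: apply Theorem \ref{th-linEng} to the regular bimodule ($T_a = L_a$, $S_a = R_a$) to get a nonzero $m$ with $Am = mA = 0$, note that $Fm$ is then a two-sided ideal, and finish by induction on $\dim A$. You instead bypass Leibniz representation theory entirely: since $L$ is linear in $a$ and $L_{ab} = L_aL_b - L_bL_a$, the set $L_A$ is a Lie algebra of nilpotent endomorphisms of $A$, classical Engel's theorem (valid over any field) gives a strictly triangularizing flag, and the paper's definition $A^{k+1} = AA^k$ --- which involves only left multiplications --- lets the powers slide down the flag. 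Your closing remark is exactly the point that makes this legitimate: the paper defines nilpotency by vanishing of these left-normed powers (with the Ayupov--Omirov result quoted in Section \ref{sec-intro} guaranteeing that this controls arbitrarily bracketed products), so proving $A^{n+1} = 0$ is precisely what is required. The trade-off: your argument is elementary and self-contained, needing only classical Lie theory; the route through Theorem \ref{th-linEng} proves more (nilpotency of all $R_a$ and a common two-sided annihilated vector), and that stronger theorem is needed elsewhere in the paper in any case (Lemma \ref{lem-cart}, Theorem \ref{th-minE}).
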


Patsourakos obtains this as a corollary of the stronger result \cite[Theorem 7]{Pats}

\begin{theorem}\label{th-linEng}  Let $A$ be a finite-dimensional Leibniz algebra and let $(S,T)$ be
a representation of $A$ on a vector space $M\ne 0$ such that $T_a$ is nilpotent for all $a \in A$.  Then $S_a$ is nilpotent for all $a \in A$ and there exists $m \in M$, $m \ne 0$ such that $T_a(m) =
S_a(m) = 0$ for all $a \in A$.\end{theorem}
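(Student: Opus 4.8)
The plan is to prove Theorem \ref{th-linEng} by reducing the Leibniz-representation statement to the classical Engel theorem for the Lie algebra $T_A$. The key observation, already supplied in the excerpt, is that the first compatibility relation $T_a \circ T_b = T_{ab} + T_b \circ T_a$ says that $a \mapsto T_a$ is a Lie homomorphism onto a Lie algebra $T_A$ of linear transformations of $M$, and that every $T_a$ is nilpotent by hypothesis. Since $T_A$ is a Lie algebra of nilpotent transformations, the linear version of Engel's theorem applies: there exists a nonzero $m_0 \in M$ annihilated by every element of $T_A$, i.e.\ $T_a(m_0) = 0$ for all $a \in A$. More is true: Engel's theorem gives a full flag $0 = M_0 \subset M_1 \subset \cdots \subset M_k = M$ of $T_A$-submodules with $T_a(M_i) \subseteq M_{i-1}$, so that $T_A$ is simultaneously strictly upper-triangularisable. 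This handles the $T$-side completely and also identifies the candidate subspace $M^{T} = \{m \mid T_a(m)=0 \text{ for all } a\}$, which is nonzero.

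The next step is to deduce that each $S_a$ is nilpotent. The natural tool is the third relation $S_{ab} = S_b \circ S_a + T_a \circ S_b$, which I would rewrite so as to express $S$-composites in terms of the nilpotent $T$'s. A cleaner route uses the second relation $T_a \circ S_b = S_b \circ T_a + S_{ab}$: this says precisely that the operators $S_b$ intertwine the $T$-action up to the correction $S_{ab}$, so that the family $\{S_b\}$ behaves like a module homomorphism with respect to the $T$-grading coming from the Engel flag. The idea is to show that $S_a$ respects (or shifts) the flag $\{M_i\}$, so that $S_a$ is also triangularisable with zero diagonal and hence nilpotent. Concretely, I would argue by induction along the flag that $S_a$ maps $M_i$ into $M_i$ (or into $M_{i-1}$), using the second relation together with the already-established nilpotence of the $T_a$ and the fact that $T_A$ acts with strictly lower-triangular matrices in a basis adapted to the flag.

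Finally I would produce a common null vector for both $S$ and $T$. Having fixed the flag, consider the bottom piece $M_1$, on which every $T_a$ acts as zero; I would then show $M_1$ is $S$-invariant and that the restricted operators $S_a|_{M_1}$ form (after using the relations) a nilpotent commuting-enough family, so that inside $M_1$ there is a nonzero vector $m$ killed by all $S_a$ as well. Equivalently, one intersects the bottom of the $T$-flag with the analogous bottom of an $S$-flag and checks the intersection is nonzero; the relations guarantee the two flags are compatible. The resulting $m \ne 0$ then satisfies $T_a(m) = S_a(m) = 0$ for all $a$, as required.

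The main obstacle I anticipate is the $S$-side: unlike $T$, the map $a \mapsto S_a$ is not a Lie homomorphism (the third relation mixes $S_b \circ S_a$ with $T_a \circ S_b$), so Engel's theorem does not apply to $S$ directly, and one cannot simply assert that the $S_a$ generate a Lie algebra of nilpotent transformations. The care lies in exploiting the coupling relations to transport the triangular structure from $T$ to $S$ without assuming $S$ is itself a representation of a Lie algebra; in particular, establishing that $S_a$ is strictly triangular on the $T$-adapted flag, and that the two flags can be chosen simultaneously compatible, is the delicate step. I would expect an induction on $\dim M$, peeling off the common $T$-null subspace and passing to a quotient where the induction hypothesis applies, to be the most robust way to organise this.
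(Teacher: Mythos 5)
First, a point of comparison: the paper itself does not prove this theorem at all --- it quotes it as \cite[Theorem 7]{Pats} --- so your argument has to stand on its own. Your treatment of the $T$-side is correct: $[T_a,T_b]=T_{ab}$ makes $T_A$ a finite-dimensional Lie algebra of nilpotent transformations, and the linear Engel theorem gives the common null vector and the flag. But the $S$-side, which is the actual content of the theorem, has a genuine gap. Your proposed route is to transport the $T$-flag to $S$ using the second relation $T_a\circ S_b = S_b\circ T_a + S_{ab}$, arguing inductively that $S_a$ preserves or shifts the flag. This stalls at the first step: for $m$ in the bottom piece $M_1=\{m\mid T_am=0 \text{ for all } a\}$ one gets $T_a(S_bm)=S_b(T_am)+S_{ab}m=S_{ab}m$, and the second relation by itself gives no control whatever over the term $S_{ab}m$; so you cannot conclude $S_bm\in M_1$, nor anything about triangularity of $S_a$ relative to the $T$-flag. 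Similarly, ``intersect the bottom of the $T$-flag with the bottom of an $S$-flag and check the intersection is nonzero'' is not a step that can be carried out: two such subspaces can perfectly well meet in zero without further input, and since $a\mapsto S_a$ is not a Lie homomorphism there is no Engel theorem producing an $S$-flag in the first place --- a difficulty you yourself flag but do not resolve.

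The missing idea is a single identity obtained by eliminating $S_{ab}$ between the second and third relations: substituting $S_{ab}=S_b\circ S_a+T_a\circ S_b$ into $T_a\circ S_b=S_b\circ T_a+S_{ab}$ and cancelling $T_a\circ S_b$ yields
\begin{equation*}
S_b\circ(T_a+S_a)=0 \qquad\text{for all } a,b\in A.
\end{equation*}
This does all the work. Taking $b=a$ gives $S_a^2=-S_aT_a$, whence by induction $S_a^{k+1}=(-1)^kS_aT_a^k$, so nilpotence of $T_a$ already forces nilpotence of $S_a$, with no flag argument at all. For the common null vector, put $N=\bigcap_{b\in A}\ker S_b$. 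Then $N$ is $T$-invariant by the second relation (for $m\in N$ one has $S_b(T_am)=T_a(S_bm)-S_{ab}m=0$, since $S_xm=0$ for every $x$, including $x=ab$), and $N\ne 0$: either some $T_a+S_a\ne 0$, in which case its image is nonzero and lies in $N$ by the displayed identity, or else $S_a=-T_a$ for every $a$, in which case $N=\bigcap_a\ker T_a\ne 0$ by Engel's theorem applied to $T_A$. Now apply Engel's theorem to the nilpotent operators $T_a|_N$ on $N$: this produces $m\ne 0$ with $T_am=0$ for all $a$, and $S_am=0$ holds automatically because $m\in N$. Your reduction of the $T$-side to Engel is the right start, but without the identity above the passage to $S$ cannot be completed along the lines you propose.
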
 

We define the derived series $A^{(r)}$ by $A^{(0)} = A$ and $A^{(r+1)} = (A^{(r)})^2$.

\begin{definition}  We say that the Leibniz algebra $A$ is soluble if there exists a chain of subalgebras $A = A_0 \supseteq A_1 \supseteq \dots \supseteq A_n=0$ such that $A_{i+1}$ is a $2$-sided ideal in $A_i$ and $A_i/A_{i+1}$ is abelian. \end{definition} 
Thus $A$ is soluble if and only if $A^{(r)} = 0$ for some $r$.  Ayupov and Omirov have proved \cite[Theorem 4]{AyO}:
\begin{theorem}\label{th-ch0}  Suppose $A$ is a soluble Leibniz algebra over a field $F$ of
characteristic $0$.  Then $A^2$ is nilpotent. \end{theorem}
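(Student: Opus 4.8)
The plan is to reduce the statement to a nilpotency criterion for the left multiplications and then invoke Engel's Theorem for Leibniz algebras (Theorem~\ref{th-absEng}). Since nilpotency of $A^2$ is both preserved and detected under extension of the base field --- one has $(A^2)^k \otimes_F \overline{F} = (A^2 \otimes_F \overline{F})^k$ and $(A \otimes_F \overline{F})^2 = A^2 \otimes_F \overline{F}$, while solubility of $A$ passes to $A \otimes_F \overline{F}$ since the derived series commutes with the extension --- I may assume at the outset that $F$ is algebraically closed of characteristic zero.

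Recall that $L_{ab} = L_a L_b - L_b L_a$, so the set $L_A = \{L_a \mid a \in A\}$ is a Lie algebra of linear transformations of the vector space $A$, and the linear map $a \mapsto L_a$ has kernel $\lcent(A)$, giving an isomorphism $A/\lcent(A) \cong L_A$ of Lie algebras. As $A$ is soluble, so is its quotient $A/\lcent(A)$, and hence so is $L_A$. The key step is then to apply Lie's Theorem to the soluble Lie subalgebra $L_A \subseteq \mathfrak{gl}(A)$: over an algebraically closed field of characteristic zero it stabilises a complete flag in $A$, so its elements are simultaneously upper triangularisable. Consequently every commutator $[L_a, L_b] = L_{ab}$ is strictly upper triangular, hence nilpotent. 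Since $A^2$ is spanned by the products $ab$ and $a \mapsto L_a$ is linear, this says precisely that $L_c$ is nilpotent for every $c \in A^2$; indeed $L_{A^2} = [L_A, L_A]$.

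It remains to transfer this to $A^2$ itself. The subspace $A^2$ is a left ideal, since $A \cdot A^2 = A^3 \subseteq A^2$, so it is invariant under each $L_c$; for $c \in A^2$ the restriction $L_c|_{A^2}$ is exactly the left multiplication by $c$ inside the Leibniz algebra $A^2$, and it is nilpotent as the restriction of a nilpotent transformation to an invariant subspace. Thus every left multiplication of the Leibniz algebra $A^2$ is nilpotent, and Theorem~\ref{th-absEng} yields that $A^2$ is nilpotent, as required.

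I expect the genuine content --- and the only place where the hypothesis of characteristic zero enters --- to be the appeal to Lie's Theorem in the second step; this is exactly the point at which the analogous statement fails in positive characteristic. An alternative to Lie's Theorem would be Cartan's solubility criterion, which likewise forces $[L_A, L_A]$ to act nilpotently but again only in characteristic zero. Everything else is formal: the field extension, the identity $L_{ab} = [L_a, L_b]$, and the reduction to Engel's Theorem.
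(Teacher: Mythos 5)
Your proof is correct, but there is nothing in the paper to compare it against: Theorem~\ref{th-ch0} is not proved in the paper at all --- it is quoted as Theorem 4 of Ayupov and Omirov \cite{AyO}. As a self-contained argument, yours is sound. The base-change step is legitimate, since both the derived series and the powers $(A^2)^k$ commute with $-\otimes_F \overline{F}$ and a subspace vanishes if and only if its extension does. The identity $L_{ab} = L_aL_b - L_bL_a$ identifies $L_A \cong A/\lcent(A)$ as a soluble Lie algebra of linear transformations of $A$; Lie's Theorem triangularises $L_A$ with respect to a common flag, so every $L_c$ with $c \in A^2$, being a linear combination of commutators $[L_a, L_b]$, is strictly triangular and hence nilpotent. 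Finally, $A^2$ is a subalgebra invariant under each $L_c$ (as $AA^2 = A^3 \subseteq A^2$), so the left multiplications of the Leibniz algebra $A^2$ are restrictions of nilpotent operators to an invariant subspace, and Theorem~\ref{th-absEng} yields nilpotency of $A^2$. This cleanly isolates where the Leibniz-specific input enters (only in the final appeal to the Engel-type theorem), the rest being classical Lie theory applied to $L_A$. One caveat: your closing suggestion that Cartan's solubility criterion would ``likewise force $[L_A, L_A]$ to act nilpotently'' is not accurate as stated --- that criterion gives a trace condition, and extracting nilpotency of the derived algebra from it requires a further Lie--Engel type argument --- but this speculative remark plays no role in your proof.
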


\section{Engel subalgebras}\label{sec-eng}
Let $a \in A$ and set $E_A(a) = \{x \in A \mid L_a^n(x) = 0 \text{ for some $n$}\}$.  As is the case for Lie algebras,  $E_A(a)$ is a subalgebra of $A$.   We call such subalgebras Engel subalgebras.  Unlike the Lie algebra case, we need not have $a \in E_A(a)$.  However, we do have

\begin{lemma}\label{lem-inE}  For any $a \in A$, there exists $a' \in E_A(a)$ such that $E_A(a') = E_A(a)$. \end{lemma}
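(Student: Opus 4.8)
The plan is to produce $a'$ as the ``Engel part'' of $a$ extracted from the Fitting decomposition of the single linear map $L_a$. Since $A$ is finite dimensional, taking $m=\dim A$ gives $A = A_0 \oplus A_1$ with $A_0 = \ker L_a^m = E_A(a)$ and $A_1 = \im L_a^m$, both $L_a$-invariant, and the projection $e\co A \to A_0$ along $A_1$ is a polynomial in $L_a$, say $e = q(L_a)$ with $q(t) = \sum_k c_k t^k$. I set $a' := e(a)$. By construction $a' \in A_0 = E_A(a)$, so the membership half of the statement is free, and the whole problem reduces to showing $E_A(a') = E_A(a)$.

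For this it suffices to prove that $L_{a'}$ is a nonzero scalar multiple of $L_a$: then $(L_{a'})^n$ and $L_a^n$ have the same kernels for every $n$, hence $L_{a'}$ and $L_a$ have the same generalized null space and the same Engel subalgebra. This is where Theorem~\ref{power} does the work. Using $L_a^k(a) = a^{k+1}$ I expand $a' = q(L_a)(a) = \sum_k c_k a^{k+1}$. Each summand with $k \ge 1$ is a product of more than one $a$, so $L_{a^{k+1}} = 0$ by Theorem~\ref{power}; only the $k=0$ term survives and $L_{a'} = c_0 L_a$. Put otherwise, $a - a'$ is a linear combination of the higher powers $a^2, a^3, \dots$, each of which lies in $\lcent(A)$, so $a-a' \in \lcent(A)$ and left multiplication is untouched in passing from $a$ to $a'$.

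It remains to pin the scalar $c_0 = q(0)$, and in particular to rule out $c_0 = 0$. I would first observe $\ker L_a \ne 0$: since $A/\lcent(A)$ is a Lie algebra and $a^2 \in \lcent(A)$, the image $\bar a$ satisfies $\ad_{\bar a}(\bar a)=0$, so $\ad_{\bar a}$ is singular on $A/\lcent(A)$ (and $L_a = 0$ outright if $\bar a = 0$); either way $L_a$ is singular. Choosing $0 \ne v \in \ker L_a \subseteq A_0$, the projection fixes $v$, while $e(v) = q(L_a)(v) = q(0)\,v = c_0 v$; hence $c_0 = 1$ and $L_{a'} = L_a$. This yields $E_A(a') = E_A(a)$ with $a' \in E_A(a)$, as required.

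The only genuine idea is the observation in the second paragraph: the component of $a$ lying outside its Engel subalgebra is assembled from the powers $a^2, a^3, \dots$, which are left central, so it is invisible to left multiplication. Everything else is the routine linear algebra of the Fitting decomposition. The one place to stay honest is the scalar $c_0$, i.e.\ checking that $E_A(a) \ne 0$; I expect this to be the only (minor) obstacle, and it is dispatched through the Lie algebra $A/\lcent(A)$.
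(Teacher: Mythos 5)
Your proof is correct, but it runs the argument on a different space than the paper does, and the comparison is instructive. Both proofs rest on the same two pillars: a Fitting decomposition with respect to $L_a$, and Theorem~\ref{power}, which makes the ``higher-power part'' of $a$ invisible to left multiplication. You take the Fitting decomposition of $L_a$ on all of $A$ and recover $a'$ by applying the projection $e = q(L_a)$; this costs you two extra pieces of machinery: the standard fact that the Fitting projection is a polynomial in the operator, and the verification that $c_0 = q(0) = 1$, which forces you to prove $\ker L_a \ne 0$ via the quotient Lie algebra $A/\lcent(A)$ (that step is sound, but note it also quietly uses that $\lcent(A)$ is $L_a$-invariant, i.e.\ a two-sided ideal). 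The paper instead decomposes the subalgebra $U = \alg a \rangle$ under $\mu = L_a|U$: writing $U = \im(\mu^r) \oplus \ker(\mu^r)$ and $a = a' + b$ accordingly, the component $b$ lies in $\im(\mu^r) \subseteq U^2$, and every element of $U^2$ is a combination of products of more than one $a$, so $L_b = 0$ by Theorem~\ref{power} immediately; hence $L_{a'} = L_a$ with no scalar to pin down and no appeal to $\lcent(A)$ at all. Working inside $\alg a \rangle$ is precisely what makes the discarded component visibly left-annihilating, which is why the paper's proof is three lines. Two remarks on your write-up: the aside that $a - a'$ is a combination of the higher powers $a^2, a^3, \dots$ is only true once you know $c_0 = 1$, so it belongs after your third paragraph, not before it; and your detour does buy a small bonus the paper never states, namely that $\ker L_a \ne 0$ for every $a$ in a nonzero Leibniz algebra, so Engel subalgebras are never zero.
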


\begin{proof} Put $U = \alg a \rangle$ and let $\mu = L_a|U \to U$.  For sufficiently large $r$,
we have $U = \im(\mu^r) \oplus \ker(\mu^r)$.  Thus there exist $a' \in \ker(\mu^r)$ and $b \in U^2$ such that $a = a' + b$.  Then $a' \in E_A(a)$, and $L_{a'} = L_a$ since $L_b = 0$. \end{proof}

\begin{lemma} \label{selfN}Suppose $U$ is a subalgebra of $A$ and $E_A(a) \subseteq U$. 
Then $\rnser_A(U) = U$. \end{lemma}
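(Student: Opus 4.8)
The plan is to reduce to the case $a \in U$ and then exploit the Fitting decomposition of $A$ under $L_a$, together with the one identity that still holds for the (generally badly-behaved) right normaliser. Clearly $U \subseteq \rnser_A(U)$ since $U$ is a subalgebra, so only the reverse inclusion needs proof.

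First I would invoke Lemma \ref{lem-inE} to replace $a$ by an element $a' \in E_A(a)$ with $E_A(a') = E_A(a)$ and $L_{a'} = L_a$. Since $a' \in E_A(a') = E_A(a) \subseteq U$, I may henceforth assume $a \in U$; this costs nothing, because $L_a$ is unchanged and hence so are the operator and all its invariant subspaces. Next I would record the Fitting decomposition $A = E_A(a) \oplus A_1$ of $A$ relative to $L_a$, where $A_1 = \bigcap_n \im(L_a^n)$ and $L_a$ restricts to an \emph{invertible} map on $A_1$; both summands are $L_a$-invariant. I would also note that $\rnser_A(U) = \bigcap_{u \in U} L_u^{-1}(U)$ is a linear subspace of $A$.

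The key step is to show that $\rnser_A(U)$ is invariant under $L_a$. Although $\rnser_A(U)$ need not be a subalgebra, for $y \in \rnser_A(U)$ and $u \in U$ the left-Leibniz identity gives $u(ay) = (ua)y + a(uy)$; since $a \in U$ we have $ua \in U$ and $uy \in U$, so both terms on the right lie in $U$, whence $ay \in \rnser_A(U)$. This is the crux of the argument and the place where $a \in U$ is essential.

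Finally I would take an arbitrary $y \in \rnser_A(U)$ and decompose $y = y_0 + y_1$ with $y_0 \in E_A(a) \subseteq U$ and $y_1 \in A_1$. As $U \subseteq \rnser_A(U)$ and $\rnser_A(U)$ is a subspace, $y_1$ lies in $W := \rnser_A(U) \cap A_1$. Now $W$ is $L_a$-invariant, being the intersection of two $L_a$-invariant subspaces, and $L_a$ is invertible on $A_1$ and hence on $W$, so $L_a(W) = W$. But for $w \in W$ we have $aw \in U$ (as $w \in \rnser_A(U)$ and $a \in U$), so $L_a(W) \subseteq U$; therefore $W = L_a(W) \subseteq U$. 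In particular $y_1 \in U$, so $y = y_0 + y_1 \in U$, giving $\rnser_A(U) \subseteq U$ and hence equality. I expect the main obstacle to be the invariance step, since the right normaliser lacks the closure properties enjoyed by $\nser_A(U)$ and $\lnser_A(U)$, forcing one to fall back on the defining Leibniz identity.
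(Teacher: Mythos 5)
Your proof is correct and is essentially the paper's own argument in expanded form: both reduce to the case $a \in U$ via Lemma \ref{lem-inE}, observe that $ay \in U$ for every $y \in \rnser_A(U)$, and conclude from the invertibility of $L_a$ away from $E_A(a)$ (the paper phrases this as ``$L_a$ acts invertibly on $A/E_A(a)$'', while you phrase it via the Fitting decomposition $A = E_A(a) \oplus A_1$). One remark: your ``key step'' is in fact redundant, since $L_a(W) \subseteq U \cap A_1 \subseteq W$ (the first inclusion because $a \in U$ and $W \subseteq \rnser_A(U) \cap A_1$, the second because $U \subseteq \rnser_A(U)$) together with injectivity of $L_a$ on $A_1$ already forces $L_a(W) = U \cap A_1 = W$ by dimension count, so the Leibniz-identity invariance argument can be deleted.
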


\begin{proof} By Lemma \ref{lem-inE}, we may suppose $a \in E_A(a)$.  Suppose $b \in
\rnser_A(U)$.  Then $ab \in U$.  Since $L_a$ acts invertibly on $A/E_A(a)$, this implies $b \in
U$. \end{proof}

\begin{theorem} Let $A$ be a finite-dimensional Leibniz algebra and suppose every maximal
subalgebra of $A$ is a right ideal.  Then $A$ is nilpotent.\end{theorem}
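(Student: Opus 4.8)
The plan is to argue by contradiction, producing a maximal subalgebra whose right normaliser is forced to be simultaneously all of $A$ (by the hypothesis) and proper (by Lemma~\ref{selfN}). First I would suppose $A$ is not nilpotent. By Theorem~\ref{th-absEng}, if every $L_a$ were nilpotent then $A$ would be nilpotent; hence there is some $a \in A$ for which $L_a$ is not a nilpotent operator. Since $A$ is finite-dimensional, $E_A(a) = \{x \in A \mid L_a^n(x) = 0 \text{ for some } n\}$ is the Fitting null component of $L_a$, and $E_A(a) = A$ would force $L_a^N = 0$ for some $N$. Thus $E_A(a)$ is a proper subalgebra of $A$.

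Next I would enlarge $E_A(a)$ to a maximal subalgebra: because $A$ is finite-dimensional and $E_A(a) \ne A$, there is a maximal subalgebra $M$ with $E_A(a) \subseteq M$ and $M \ne A$. The hypothesis says $M$ is a right ideal, i.e.\ $MA \subseteq M$, which is exactly the statement that every element of $A$ lies in the right normaliser of $M$; thus $\rnser_A(M) = A$. On the other hand, $M$ contains $E_A(a)$, so Lemma~\ref{selfN} applies with $U = M$ and gives $\rnser_A(M) = M$. Combining these, $A = M$, contradicting $M \ne A$. Hence no non-nilpotent $A$ can satisfy the hypothesis, and $A$ is nilpotent.

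The conceptual content is carried almost entirely by the two imported results, so I expect no serious obstacle in the deduction itself; the points needing care are (i) confirming that ``$L_a$ not nilpotent'' is genuinely equivalent to ``$E_A(a) \ne A$'' in finite dimension, which is where Theorem~\ref{th-absEng} is used to locate such an $a$, and (ii) matching the one-sided conditions correctly --- that a right ideal is precisely a subalgebra with full right normaliser, so that Lemma~\ref{selfN} can be played off against the hypothesis. Note that Lemma~\ref{lem-inE} is implicitly available should one wish to assume $a \in E_A(a)$, but it is not strictly needed here, since the proof of Lemma~\ref{selfN} already absorbs that reduction.
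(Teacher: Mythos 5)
Your proposal is correct and is essentially the paper's own argument: both proofs combine Lemma~\ref{selfN} (a maximal subalgebra containing an Engel subalgebra $E_A(a)$ is its own right normaliser, contradicting the right-ideal hypothesis) with Theorem~\ref{th-absEng} to conclude nilpotency. The only difference is presentational --- you run it as a global contradiction starting from ``$A$ not nilpotent,'' while the paper argues directly that $E_A(a)=A$ for every $a$ --- and your side remarks (the equivalence of $E_A(a)=A$ with nilpotency of $L_a$ in finite dimension, and the dispensability of Lemma~\ref{lem-inE} at this level) are accurate.
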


\begin{proof}   Suppose $E_A(a) \ne A$.  Then there exists a maximal subalgebra $M \supseteq
E_A(a)$.    By Lemma \ref{selfN}, we have $\rnser_A(M) = M$ contrary to hypothesis.  Therefore  $E_A(a) = A$ for all $a \in A$.  By Theorem \ref{th-absEng}, $A$ is nilpotent.
\end{proof}

\begin{definition}  We say that the subalgebra $U$ is right subnormal in $A$, written $U \snr A$ if there exists a chain of subalgebras $U_0 = A \supseteq U_1 \supseteq \dots \supseteq U_k = U$ with each $U_i$ a right ideal in $U_{i-1}$. \end{definition}

\begin{definition}  The Frattini subalgebra $\Phi(A)$ of the algebra $A$ is the intersection of the
maximal subalgebras of $A$.\end{definition}

\begin{theorem}\label{th-FratQ} Suppose that $U \snr A$, $V$ is an ideal of $U$ and that $V
\subseteq \Phi(A)$.  Suppose that $U/V$ is nilpotent.  Then $U$ is nilpotent. \end{theorem}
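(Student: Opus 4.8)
The plan is to characterise nilpotency of $U$ through Engel subalgebras and then to use the right-subnormal chain to force the relevant Fitting component into $\Phi(A)$. Since $U$ is itself a finite-dimensional Leibniz algebra, Theorem \ref{th-absEng} shows that $U$ is nilpotent exactly when $L_a|_U$ is nilpotent for every $a \in U$, equivalently when $E_U(a) = U$ for all $a \in U$, where $E_U(a) = E_A(a) \cap U$. So I would argue by contradiction: assume some $a \in U$ has $E_U(a) \ne U$ and derive a contradiction. (Lemma \ref{lem-inE} could be used to normalise $a$, but it turns out not to be needed.)

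First I would set up the Fitting decomposition of $L_a$. Take the right-subnormal chain $A = C_0 \supseteq \dots \supseteq C_k = U$ witnessing $U \snr A$. Each $C_i$ is a subalgebra containing $a$ (as $a \in U \subseteq C_i$), hence is $L_a$-invariant, and so decomposes as $C_i = (E_A(a) \cap C_i) \oplus F_i$, where $F_i = A_1 \cap C_i$ is the component on which $L_a$ acts invertibly and $A_1 = \bigcap_m L_a^m(A)$. The assumption $E_U(a) \ne U$ says precisely that $F_k \ne 0$. Using that $U/V$ is nilpotent, $L_a^n(U) \subseteq V$ for some $n$, and since $L_a$ is invertible on $F_k$ this forces $F_k = L_a^n(F_k) \subseteq V \subseteq \Phi(A)$.

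The key step, and the place where right-subnormality enters, is to climb the chain showing $F_{i-1} = F_i$ for each $i$. Since $a \in C_i$ and $C_i$ is a right ideal of $C_{i-1}$, we have $ac \in C_i$ for every $c \in C_{i-1}$, that is, $L_a(C_{i-1}) \subseteq C_i$. Applying $L_a$ repeatedly and using that $C_i$ is $L_a$-invariant gives $F_{i-1} = L_a^N(C_{i-1}) \subseteq C_i$ for large $N$, whence $F_{i-1} \subseteq A_1 \cap C_i = F_i \subseteq F_{i-1}$, so $F_{i-1} = F_i$. Iterating from $i = k$ down to $i = 1$ yields $A_1 = F_0 = F_k \subseteq \Phi(A)$.

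Finally I would invoke the non-generator property of $\Phi(A)$: if a subalgebra $B$ satisfies $B + \Phi(A) = A$ with $B \ne A$, then $B$ lies in a maximal subalgebra $M$, which also contains $\Phi(A)$, giving the contradiction $A \subseteq M$. Here $A = E_A(a) \oplus A_1 \subseteq E_A(a) + \Phi(A)$, so $E_A(a) = A$; but then $L_a$ is nilpotent on $A$, forcing $A_1 = 0$ and contradicting $F_k \ne 0$. Hence $E_U(a) = U$ for all $a \in U$, and $U$ is nilpotent. I expect the main obstacle to be the climbing step: recognising that the right-ideal condition delivers exactly $L_a(C_{i-1}) \subseteq C_i$, which is what makes the Fitting one-component stable along the chain. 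Everything else is standard Fitting and Frattini bookkeeping.
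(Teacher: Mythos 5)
Your proof is correct and follows essentially the same route as the paper's: both hinge on the right-ideal chain giving $L_a(C_{i-1}) \subseteq C_i$, nilpotency of $U/V$ pushing the invertible (Fitting-one) part of $L_a$ into $V \subseteq \Phi(A)$, the Frattini non-generator property forcing $E_A(a) = A$, and finally Theorem \ref{th-absEng}. The paper merely streamlines your Fitting-component bookkeeping into the direct computation $L_u^{k+r}(A) \subseteq \Phi(A)$ combined with $L_u^{k+r}(A) + E_A(u) = A$, with no need for the contradiction framing.
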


\begin{proof} Let $u \in U$ and let $U_0 = A \supseteq U_1 \supseteq \dots \supseteq U_k = U$ be a chain of subalgebras with each $U_i$ a right ideal in $U_{i-1}$.  Then $L_u^k(A) \subseteq U$.  Since $U/V$ is nilpotent, for some $r$, we have $L_u^r(U) \subseteq V$.  Thus $L_u^{k+r}(A) \subseteq \Phi(A)$.  But $L_u^{k+r}(A) + E_A(u) = A$.  Therefore $E_A(u) = A$.  By Theorem \ref{th-absEng}, $U$ is nilpotent.
\end{proof}

\begin{cor} Suppose $U \subseteq \Phi(A)$ is a right ideal of $A$.  Then $U$ is nilpotent. \end{cor}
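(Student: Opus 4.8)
The plan is to obtain this as the degenerate case of Theorem \ref{th-FratQ}, specialising its hypotheses to $V = U$ and the shortest possible right-subnormal chain. First I would note that a right ideal $U$ of $A$ is automatically right subnormal: the one-step chain $U_0 = A \supseteq U_1 = U$ exhibits $U \snr A$ with $k = 1$, since $U$ is by hypothesis a right ideal in $A$. This is the only structural observation needed before appealing to the theorem.

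Next I would take $V = U$ in the statement of Theorem \ref{th-FratQ}. The subalgebra $U$ is trivially an ideal of itself, and $V = U \subseteq \Phi(A)$ holds precisely by the hypothesis of the corollary. The quotient $U/V = U/U = 0$ is nilpotent (vacuously), so all the hypotheses of Theorem \ref{th-FratQ} are met. The theorem then yields directly that $U$ is nilpotent, which is the desired conclusion.

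I do not expect any genuine obstacle here: the entire force of the argument is already packaged inside Theorem \ref{th-FratQ}, whose proof combines the inclusion $L_u^{k+r}(A) \subseteq \Phi(A)$ with the identity $L_u^{k+r}(A) + E_A(u) = A$ to force $E_A(u) = A$ for all $u \in U$, and then invokes the abstract Engel theorem, Theorem \ref{th-absEng}. The corollary merely records the convenient special case in which the ideal $V$ of $U$ may be taken to be all of $U$ and the chain has length one, so that the nilpotency-of-quotient hypothesis becomes automatic. Thus the only thing to verify is that right ideals are right subnormal, which is immediate.
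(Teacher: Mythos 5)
Your proposal is correct and is precisely the intended argument: the paper states this corollary without proof as an immediate consequence of Theorem \ref{th-FratQ}, and your specialisation (the one-step chain $A \supseteq U$ giving $U \snr A$, together with $V = U$ so that $U/V = 0$ is trivially nilpotent) is exactly how that consequence is obtained.
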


\begin{definition} The centraliser of the subalgebra $U$ in $A$ is the subspace $$\cser_A(U) = \{x \in A \mid xU = Ux = 0\}.$$ \end{definition}

Clearly, $\cser_A(U) \le A$ and, if $U \ideq A$ then $\cser_A(U) \ideq A$.
\begin{definition} A soluble algebra $A$ is called primitive if it has a minimal ideal $C$ such that $\cser_A(C) = C$.\end{definition}
\begin{definition}  The socle $\soc(A)$ of the algebra $A$ is the sum of the minimal ideals of $A$. \end{definition}
Clearly, a primitive algebra has only one minimal ideal and the socle is that unique minimal deal.

\begin{lemma}\label{lem-primsoc} Let $P$ be a primitive Leibniz algebra which is not a Lie algebra.  Then $\soc(P) = \lcent(P)$.  \end{lemma}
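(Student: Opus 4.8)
Write $C = \soc(P)$; since $P$ is primitive, $C$ is its unique minimal ideal and $\cser_P(C) = C$. The plan is to prove the two inclusions $C \subseteq \lcent(P)$ and $\lcent(P) \subseteq C$ separately, and the whole argument rests on the facts recorded earlier that $\lcent(P) \ideq P$, that $P/\lcent(P)$ is a Lie algebra, and that $\cser_P(C) = C$.

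First I would establish that $\lcent(P) \ne 0$. Since $P/\lcent(P)$ is a Lie algebra, were $\lcent(P)$ to vanish, $P$ itself would be a Lie algebra, contrary to hypothesis. Hence $\lcent(P)$ is a nonzero ideal of $P$. In finite dimension every nonzero ideal contains a minimal ideal of $P$, and as $C$ is the only one, $C \subseteq \lcent(P)$. This already gives the first inclusion $\soc(P) = C \subseteq \lcent(P)$.

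For the reverse inclusion, I would show that every $z \in \lcent(P)$ lies in $\cser_P(C) = C$, that is, $zC = 0 = Cz$. The equality $zC = 0$ is immediate, since $z$ left-annihilates all of $P$ by definition of the left centre. For $Cz = 0$ I would invoke the inclusion $C \subseteq \lcent(P)$ just obtained: every $c \in C$ then also left-annihilates all of $P$, so in particular $cz = 0$. Thus $z \in \cser_P(C) = C$, which gives $\lcent(P) \subseteq C$, and combining the two inclusions yields $\soc(P) = \lcent(P)$.

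The only genuine content is the first inclusion, and the main obstacle is the pair of observations that $\lcent(P) \ne 0$ (forced by $P$ not being a Lie algebra) and that the unique minimal ideal $C$ sits inside every nonzero ideal. Once $C \subseteq \lcent(P)$ is in hand, the centraliser computation is automatic, because membership in the left centre makes both products $zC$ and $Cz$ vanish with no further work.
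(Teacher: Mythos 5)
Your proof is correct and follows essentially the same route as the paper's: both establish $\soc(P) \subseteq \lcent(P)$ by noting that $\lcent(P)$ is a nonzero ideal (nonzero precisely because $P$ is not a Lie algebra) and hence contains the unique minimal ideal, and then obtain the reverse inclusion from $\cser_P(\soc(P)) = \soc(P)$. Your explicit computation of $zC = 0$ and $Cz = 0$ is exactly the content of the paper's remark that $\lcent(P)$ is abelian, just written out in detail.
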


\begin{proof} $\lcent(P)$ is a non-zero ideal, so contains a minimal ideal.  As $\soc(P)$ is the only minimal ideal, $\soc(P) \subseteq \lcent(P)$.  Since $\cser_A(\soc(P)) = \soc(P)$ and $\lcent(P)$ is abelian, $\soc(P) = \lcent(P)$. \end{proof}

\begin{lemma}\label{lem-prim} Let $P$ be a primitive algebra and let $C = \soc(P)$.  Then there exists a maximal subalgebra $M$ such that $M+C=P$ and $M \cap C = 0$. \end{lemma}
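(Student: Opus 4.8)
The plan is to reduce the statement to the single assertion $C\not\subseteq\Phi(P)$, and then to extract that from the hypothesis $\cser_P(C)=C$ by an Engel-subalgebra argument. We may assume $C\ne P$, since otherwise $P$ is $1$-dimensional and $M=0$ works.

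First I would record that $C$ is abelian: $P$ soluble forces $C^2\ne C$, while the Leibniz identity gives $p(c_1c_2)=(pc_1)c_2+c_1(pc_2)\in C^2$ and $(c_1c_2)p=c_1(c_2p)-c_2(c_1p)\in C^2$, so $C^2\ideq P$; minimality then gives $C^2=0$. Next, supplements of $C$ are automatically complements: if $M$ is a subalgebra with $M+C=P$ and $C\not\subseteq M$, then for $x\in M\cap C$ and a typical element $m+c$ the relations $(m+c)x=mx$ and $x(m+c)=xm$ (as $cx=xc=0$) show $M\cap C\ideq P$, so $M\cap C=0$ by minimality. If moreover $M$ is maximal with $C\not\subseteq M$, then $M+C\supsetneq M$ forces $M+C=P$. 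Thus the lemma follows once I produce a maximal subalgebra not containing $C$, i.e. once I show $C\not\subseteq\Phi(P)$; and by the modular law any subalgebra $H$ with $H\oplus C=P$ is already maximal, so it even suffices to exhibit a subalgebra complement.

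To bring in the hypothesis I would first show some $L_a$ acts non-nilpotently on $C$. Regarding $C$ as a $P$-bimodule with $T_a=L_a|_C$ and $S_a=R_a|_C$, if every $T_a$ were nilpotent then Theorem~\ref{th-linEng} would give $0\ne m\in C$ with $am=ma=0$ for all $a$; then $Fm$ is a non-zero $2$-sided ideal inside $C$, so $Fm=C$, whence $pC=Cp=0$ for all $p$ and $\cser_P(C)=P\ne C$, a contradiction. Hence $L_a|_C$ is non-nilpotent for some $a$, which by Lemma~\ref{lem-inE} we may take in $E_P(a)$. Fitting's decomposition for $L_a$ gives $P=E_P(a)\oplus P_1$ with $P_1\ne0$ the part on which $L_a$ is invertible; since $L_a|_C$ is non-nilpotent, $C\cap P_1\ne0$, so $C\not\subseteq E_P(a)$ and $E_P(a)$ is a proper subalgebra. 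I would also note the mechanism that for any nilpotent ideal $I$ the product $IC+CI$ is an ideal contained in the minimal ideal $C$, and nilpotency of $I$ rules out $IC+CI=C$; hence $I\subseteq\cser_P(C)=C$. Applied to the nilradical (which contains the nilpotent ideal $C$) this yields $N(P)=C$.

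The main obstacle is the last step: passing from ``$C\not\subseteq E_P(a)$'' to ``$C\not\subseteq\Phi(P)$'', since a maximal subalgebra containing $E_P(a)$ could reabsorb $C$. Equivalently, one must exclude $C\subseteq\Phi(P)$. The route I expect to work is to show $\Phi(P)$ is a nilpotent ideal (the Corollary to Theorem~\ref{th-FratQ} supplies nilpotency once $\Phi(P)$ is known to be a right ideal); the mechanism above then gives $\Phi(P)\subseteq\cser_P(C)=C$, so $C\subseteq\Phi(P)$ would force $\Phi(P)=C=N(P)$, an abelian self-centralising ideal equal to the whole Frattini subalgebra. Deriving a contradiction from this configuration is the delicate point. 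I would attack it through the faithful action of $P/C$ on $C$ (faithful because $\cser_P(C)=C$), reducing to the Lie-algebra quotient, which is a Lie algebra by Lemma~\ref{lem-primsoc} together with the Ayupov--Omirov remark, and invoking the classical splitting theorem for primitive soluble Lie algebras.
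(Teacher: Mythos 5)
Your preparatory reductions are correct, and in fact they coincide with how the paper finishes its own argument: $C$ is abelian, and for a subalgebra $M$ with $M+C=P$ and $C\not\subseteq M$ the computation $(m+c)x=mx$, $x(m+c)=xm$ shows $M\cap C\ideq P$, hence $M\cap C=0$; your Engel argument via Theorem~\ref{th-linEng}, showing that some $L_a$ acts non-nilpotently on $C$, is also sound. But the gap is exactly where you flag it, and the route you sketch does not close it. First, your plan needs $\Phi(P)$ to be a right ideal of $P$ before the Corollary to Theorem~\ref{th-FratQ} applies; nothing in the paper or in your argument establishes this, and even for Lie algebras the Frattini subalgebra need not be an ideal in general (there are soluble counterexamples in prime characteristic), so the chain $\Phi(P)\subseteq\cser_P(C)=C$ is unsupported. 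Second, and more fundamentally, the endgame --- reduce to the Lie quotient and invoke the classical splitting theorem --- cannot work in the only case that matters. If $P$ is not a Lie algebra, then $C=\lcent(P)$ by Lemma~\ref{lem-primsoc}, so $P$ itself is not a Lie algebra and $0\to C\to P\to P/C\to 0$ is a genuinely Leibniz extension (with zero right action on $C$); the theorem of \cite[Theorem 1.1]{BN} splits Lie extensions and says nothing about it. Note that split Leibniz extensions of a Lie algebra by a left module with zero right action are themselves non-Lie Leibniz algebras, so knowing that $P/C$ is a Lie algebra gives no purchase on the splitting of $P$: what you would need is a vanishing statement for Leibniz $2$-cohomology, which is essentially the lemma you are trying to prove.

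The idea you are missing is the paper's choice of the element whose Engel subalgebra is taken: it must come from the second socle layer, not from $P$ at large. Let $B/C$ be a minimal ideal of $P/C$. Solubility gives $B^2\subseteq C$; since $B^2$ is an ideal of $P$ and $B\not\subseteq\cser_P(C)=C$, minimality forces $B^2=C$, and then $BC\ne 0$ (if $BC=0$ then $CB=B^2B\subseteq BB^2=0$, again putting $B$ inside the centraliser of $C$). Your own Engel argument, applied to $C$ as a $B$-bimodule (using $B\ideq P$ to see that the common annihilator of $B$ in $C$ is an ideal of $P$), then yields $b\in B$ with $L_b|_C$ non-nilpotent. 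For this $b$ one has $L_b(P)\subseteq B$ and hence $L_b^2(P)\subseteq B^2=C$, so the Fitting decomposition of $L_b$ gives $E_P(b)+C=P$ automatically, while non-nilpotence of $L_b|_C$ gives $C\not\subseteq E_P(b)$. Now $M=E_P(b)$ feeds directly into your ``supplement implies complement implies maximal'' reductions, and the lemma is proved; the question of whether $C\subseteq\Phi(P)$, which your route had to confront head-on, never arises.
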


\begin{proof}  We may suppose that $P \ne C$.  Let $B/C$ be a minimal ideal of $P/C$.  Since $P$ is soluble, $B^2 \subseteq C$.  The $B^r$ are ideals of $P$.  As $\cser_P(C)= C$, $B^2 \ne 0$, so $B^2 = C$.  If $BC=0$, then $CB = B^2B \subseteq BB^2 =0$, so $BC \ne 0$.  Thus there exists $b \in B$ such that $L_b(C) \ne 0$.  Put $M = E_P(b)$.  Since $L_b^2(P) \subseteq C$, we have $M + C = P$.  But $M \cap C$ is an ideal of $P$ and $M \not\supseteq C$ so $M \cap C = 0$.
\end{proof}

If $C$ is an abelian ideal of the Leibniz algebra $A$ and $c \in C$, then the map $\alpha_c = (1 + L_c) \co A \to A$ is an automorphism.  The subalgebras $U,V$ are said to be $C$-conjugate if $V = \alpha_c(U)$ for some $c \in C$.  For a primitive Lie algebra $P$ with socle $C$, all complements to $C$ in $P$ are $C$-conjugate by \cite[Theorem 1.1]{BN}.  Somewhat surprisingly, this is also true for primitive Leibniz algebras $P$ with $\soc(P) = \lcent(P)$.

\begin{lemma}\label{lem-single}  Let $P$ be a primitive Leibniz algebra with socle $C = \lcent(P)$.   Then there is only one complement to $C$ in $P$. \end{lemma}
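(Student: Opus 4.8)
The plan is to parametrise all complements of $C$ by certain linear maps into $C$ and to show that only the zero map can occur. First I would record the structural facts that drive everything. Since $C = \lcent(P)$, every element of $C$ annihilates $P$ on the left, so $L_c = 0$ for $c \in C$; consequently $C$ is abelian, $P/C$ is a Lie algebra, and the two-sided action of $P$ on $C$ collapses to the single left action $T_a(c) = ac$. Because $ca = 0$ for all $c \in C$ and $a \in P$, a subspace of $C$ is an ideal of $P$ exactly when it is $T$-invariant; minimality of $C$ therefore makes $C$ an irreducible module for the Lie algebra $L = P/C$, while $\cser_P(C) = C$ makes this module faithful. Finally $L$ is soluble, being a quotient of the soluble algebra $P$.

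Next I would fix one complement $M$ (for instance the one produced by Lemma \ref{lem-prim}) and compare it with an arbitrary complement $M'$. Each is a subalgebra mapping isomorphically onto $P/C$ under the projection, so for $m \in M$ there is a unique element of $M'$ congruent to $m$ modulo $C$; writing it as $m + d(m)$ defines a linear map $d \co M \to C$ with $M' = \{\, m + d(m) \mid m \in M \,\}$. Computing $(m_1 + d(m_1))(m_2 + d(m_2))$ and using $L_{d(m_1)} = 0$ to annihilate the two terms that begin with $d(m_1)$, I am left with $m_1 m_2 + m_1\,d(m_2)$; comparing with $\theta(m_1 m_2) = m_1 m_2 + d(m_1 m_2)$ (where $\theta(m)=m+d(m)$) gives the one-sided cocycle identity
\[
 d(m_1 m_2) = T_{m_1}(d(m_2)) = m_1\, d(m_2).
\]

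The crucial observation — the step I expect to carry the whole argument — is that this identity is \emph{one-sided}: it says precisely that $d$ is a homomorphism of $L$-modules from the adjoint module $P/C$ (action $x\cdot y = xy$) to $C$. Two consequences are then immediate: $\im d$ is a submodule of $C$, hence $0$ or all of $C$ by irreducibility; and $\ker d$ is an ideal of $L$. If $\im d = 0$ then $d = 0$ and $M' = M$, which is the desired uniqueness. To exclude $\im d = C$, note that then $d$ is onto, and for $k \in \ker d$ and any $y$ one has $k\,d(y) = d(ky) = 0$, since $ky \in \ker d$; surjectivity gives $k\cdot C = 0$, so faithfulness forces $k = 0$. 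Thus $d$ would be a module isomorphism $P/C \cong C$, making the adjoint module of $L$ irreducible, i.e. $L$ has no proper nonzero ideals.

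It remains to see that a nonzero soluble Lie algebra cannot have this property. If $L$ is non-abelian, then $[L,L]$ is a proper nonzero ideal, a contradiction; if $L$ is abelian, irreducibility of the adjoint module forces $\dim L = 1$, whose adjoint action is zero, contradicting faithfulness of $C \cong P/C$. Hence $\im d = C$ is impossible, so $d = 0$ and the complement is unique. The main obstacle is recognising the one-sidedness of the cocycle condition — a genuine feature of the Leibniz setting, stemming from $L_c = 0$ for $c \in C$ — and converting it into the statement that $d$ is a module homomorphism out of the adjoint module; once that is in place, irreducibility, faithfulness, and solubility finish the proof cleanly.
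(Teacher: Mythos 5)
Your proof is correct and takes essentially the same route as the paper: parametrise the second complement by a linear map $d \co M \to C$, observe that $C = \lcent(P)$ kills the left-hand terms so the cocycle identity is one-sided, making $d$ a homomorphism from the adjoint module to $C$, and then use irreducibility, faithfulness and solubility to force $d = 0$. The only divergence is the endgame: the paper notes that if $d \ne 0$ then $\ker d$ is a maximal ideal of the soluble algebra $M$, hence contains $M^2$, so $M$ acts trivially on $M/\ker d \cong C$, contradicting faithfulness --- a slightly shorter finish than your argument that $d$ would be an isomorphism onto $C$ followed by the abelian/non-abelian case split.
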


\begin{proof}  Let $U,V$ be complements to $C$.  The map $\alpha \co U \to V$ which maps $u \in U$ to the unique element of $V$ in the coset $u+C$ is an isomorphism.  Let $f \co U \to C$ be given by $f(u) = \alpha(u) - u$.  Since $\alpha$ is an isomorphism and $f(u) \in \lcent(P)$, for $u,u' \in U$, we have
$$uu' + f(uu') = (u+f(u))(u' + f(u')) = uu' + u f(u').$$
Thus $f \co U \to C$ is a $U$-module homomorphism.  Suppose $f \ne 0$ and let $U_0 = \ker(f)$.  Since $C$ is an irreducible $U$-module, $U_0$ is a maximal ideal of $U$.  Thus $U_0 \supseteq U^2$ and $U$ acts trivially on $U/U_0$.  But the action of $U$ on  $C$ is faithful.  It follows that $f = 0$ and $U=V$.\end{proof}

We thus have
\begin{theorem}\label{th-conjP}  Let $P$ be a primitive Leibniz algebra with socle $C$.  Then $P$ splits over $C$ and all complements to $C$ in $P$ are $C$-conjugate. \end{theorem}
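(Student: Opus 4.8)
The plan is to obtain this theorem by assembling the three preceding lemmas, since all of the substantive work has already been carried out there. The statement has two parts---that $P$ splits over $C$, and that its complements form a single $C$-conjugacy class---and I would treat them separately. For the splitting I would simply invoke Lemma~\ref{lem-prim}, which produces a maximal subalgebra $M$ with $M + C = P$ and $M \cap C = 0$. This is exactly the assertion that $M$ is a complement to $C$, so $P = M \oplus C$ as vector spaces and $P$ splits over $C$; no further argument is needed here.

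For the conjugacy of complements I would split into two cases according to whether $P$ is a Lie algebra. If $P$ is a Lie algebra, the conjugacy of all complements to the minimal ideal is exactly the content of the cited result \cite[Theorem 1.1]{BN}, and I would appeal to it directly. If $P$ is not a Lie algebra, then Lemma~\ref{lem-primsoc} applies and gives $\soc(P) = \lcent(P)$, so that $C = \lcent(P)$; this is precisely the hypothesis needed to invoke Lemma~\ref{lem-single}, which tells us that there is in fact only a single complement $U$ to $C$. A single complement is trivially one conjugacy class, since $U = \alpha_0(U)$ with $0 \in C$, so the conjugacy statement holds in this case as well.

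The point requiring the most care is verifying that the two cases together cover every primitive Leibniz algebra and that the non-Lie case reduces to uniqueness of the complement rather than to the Lie-theoretic conjugacy. It is worth flagging that the two cases deliver qualitatively different pictures---a genuine $C$-conjugacy class of complements in the Lie case, but a single complement in the strictly non-Lie case---both of which are subsumed under the one statement of the theorem. Consequently the main obstacle does not lie in this theorem at all: the real work sits in the lemmas it invokes, and above all in the module-theoretic argument of Lemma~\ref{lem-single}, where the cocycle $f$ is forced to vanish because $C$ is a faithful irreducible $U$-module while $U$ must act trivially on the quotient $U/U_0$.
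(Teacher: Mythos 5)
Your proposal is correct and matches the paper's own reasoning exactly: the paper's "proof" is just the phrase "We thus have," meaning precisely the assembly you describe---splitting from Lemma~\ref{lem-prim}, conjugacy via \cite[Theorem 1.1]{BN} in the Lie case, and via Lemma~\ref{lem-primsoc} plus the uniqueness in Lemma~\ref{lem-single} in the non-Lie case. Your observation that the non-Lie case yields a single complement (trivially one conjugacy class) is the same "somewhat surprising" point the paper makes just before Lemma~\ref{lem-single}.
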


\section{Cartan subalgebras}\label{sec-cartan}
\begin{definition} A Cartan subalgebra of the Leibniz algebra $A$ is a nilpotent subalgebra $C$ such that $C = \nser_A(C)$. \end{definition}

The definition used in the literature (for example, see Omirov \cite{Om}) imposes the stronger requirement that $\rnser_A(C)= C$.  The following lemma shows that this is unnecessary.

\begin{lemma}\label{lem-cart} Let $C$ be a Cartan subalgebra of $A$ and let $U$ be a subalgebra of $A$ which contains $C$.  Then $\rnser_A(U) = U$. \end{lemma}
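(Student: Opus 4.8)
The plan is to reduce the statement to the mechanism behind Lemma~\ref{selfN}, but carried out for the whole nilpotent algebra $C$ at once rather than for a single element. Write $L_C = \{L_c \mid c \in C\}$ for the Lie algebra of left multiplications by elements of $C$; since $L_{cc'} = [L_c, L_{c'}]$, the $n$-th term of its lower central series is spanned by left multiplications $L_b$ with $b$ a product of $n$ elements of $C$, and such $b$ lie in the power $C^n$. As $C$ is nilpotent, $C^n = 0$ for large $n$, so $L_C$ is a nilpotent Lie algebra of linear transformations of $A$. I may therefore form the Fitting decomposition $A = A_0 \oplus A_1$ relative to $L_C$, where $A_0 = \{x \in A \mid \text{for each } c \in C,\ L_c^n(x) = 0 \text{ for some } n\}$ and $A_1$ carries no trivial $L_C$-composition factor. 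Each $L_c$ is a derivation, so each single-element Fitting null space $\{x \mid L_c^n(x) = 0 \text{ some } n\}$ is a subalgebra; hence $A_0$, their intersection, is a subalgebra, and it contains $C$ (for $x \in C$ one has $L_c^n(x) \in C^{n+1} = 0$).

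The key step is to show $A_0 = C$, and this is where the Cartan hypothesis $\nser_A(C) = C$ enters. Since $A_0$ is a subalgebra and $C$ is a subalgebra of $A_0$ closed under both the left and right action of $C$, the quotient $M = A_0/C$ is a $C$-bimodule, with representation $(S,T)$ given by $T_c(x+C) = cx + C$ and $S_c(x+C) = xc + C$; by construction every $T_c$ is nilpotent. If $A_0 \neq C$, then $M \neq 0$, and Theorem~\ref{th-linEng} supplies a nonzero $\bar x = x + C$ with $T_c(\bar x) = S_c(\bar x) = 0$ for all $c \in C$, that is, $cx \in C$ and $xc \in C$ for all $c$, with $x \notin C$. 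Then $x \in \nser_A(C) \setminus C$, contradicting $\nser_A(C) = C$. Hence $A_0 = C$.

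To finish, note that $C \subseteq U$ gives $CU \subseteq U$, so $U$ is $L_C$-invariant, and since $U \supseteq A_0$ the Fitting decomposition restricts to $U = C \oplus U_1$ with $U_1 = U \cap A_1$. Thus $A/U \cong A_1/U_1$ as $L_C$-modules, and being a quotient of the purely Fitting-one module $A_1$ it has zero Fitting null component; in particular no nonzero element of $A/U$ is annihilated by all the $L_c$. Now take $b \in \rnser_A(U)$. Applying the defining condition to $u = c \in C \subseteq U$ gives $cb \in U$ for every $c \in C$, i.e. $L_c(b+U) = 0$ in $A/U$ for all $c$. Therefore $b + U = 0$, that is $b \in U$. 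Since the reverse inclusion $U \subseteq \rnser_A(U)$ is immediate, $\rnser_A(U) = U$.

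The conceptual heart is the Engel argument of the second paragraph identifying $A_0$ with $C$; the main technical obstacle is marshalling the standard facts about the Fitting decomposition of $A$ under the nilpotent Lie algebra $L_C$ over an arbitrary base field, namely that $A_0$ is the intersection of the single-element null spaces, that $A = A_0 \oplus A_1$ with $A_1$ free of trivial composition factors, and that this decomposition passes to the quotient $A/U$. None of these steps needs a lower bound on $|F|$, which is consistent with the lemma being stated without one.
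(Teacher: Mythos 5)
Your proof is correct and takes essentially the same route as the paper: the paper also first shows $\bigcap_{c \in C} E_A(c) = C$ by applying Theorem \ref{th-linEng} to the $C$-bimodule $E/C$ and invoking $\nser_A(C) = C$, and then uses the Fitting-type decomposition of $A$ under the nilpotent Lie algebra $L = \{L_c \mid c \in C\}$ (phrased there as a submodule $B$ with every composition factor of $B$ trivial and none of $A/B$ trivial) to conclude that no nonzero element of $A/U$ is annihilated by all $L_c$, whence $\rnser_A(U) = U$. The only difference is cosmetic: you spell out the nilpotency of $L_C$ and the standard arbitrary-field properties of the Fitting decomposition, which the paper takes for granted.
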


\begin{proof}  Let $E = \bigcap_{c \in C} E_A(c)$.  Then $E \ge C$ is a subalgebra of $A$, so a submodule of the $C$-bimodule $A$.  All the $L_c$ for $c \in C$ act nilpotently on $E/C$.   If $E \ne C$, then by Theorem \ref{th-linEng}, there exists $e \in E$, $e \not\in C$ such that $L_c(e), R_c(e) \in C$ for all $c \in C$, that is, $e \in \nser_A(C)$ contrary to hypothesis.  Thus $E = C$. 

We consider $A$ as a module for the Lie algebra $L = \{L_c \mid c \in C\}$.  Since $C$ is nilpotent, $L$ is nilpotent and $A$ has a submodule $B$ such that $L$ acts trivially on every composition factor of $B$ and non-trivially on every composition factor of $A/B$.  From $E = C$, it follows that $B = C$.  Thus $A/U$ has no composition factor on which $L$ acts trivially, so $\rnser_A(U) = U$.
\end{proof}

\begin{cor}\label{cor-cart}  Let $C$ be a Cartan subalgebra of $A$ and let $K \ideq A$.  Then $C+ K/K$ is a Cartan subalgebra of $A/K$. \end{cor}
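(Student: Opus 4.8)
The plan is to verify the two defining properties of a Cartan subalgebra for the subalgebra $\bar C := (C+K)/K$ of $\bar A := A/K$, namely that it is nilpotent and that it is self-normalising. Throughout I write $U = C + K$. Since $C$ is a subalgebra and $K \ideq A$, the set $U$ is a subalgebra of $A$ containing $C$, and $\bar C = U/K$ is the corresponding subalgebra of $\bar A$. Nilpotency is the easy half: $\bar C = U/K \cong C/(C \cap K)$ is a quotient of the nilpotent algebra $C$, hence nilpotent.

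The substance is to show $\nser_{\bar A}(\bar C) = \bar C$, and the first thing I would do is translate the normaliser condition in $\bar A$ into a normaliser condition for $U$ in $A$. Given $a \in A$, the image $\bar a$ lies in $\nser_{\bar A}(\bar C)$ precisely when $ac \in U$ and $ca \in U$ for every $c \in C$, because the cosets $c + K$ with $c \in C$ exhaust $\bar C$. Here is where $K$ being a two-sided ideal does the work: for an arbitrary element $u = c + k$ of $U$ we have $au = ac + ak$ with $ak \in K \subseteq U$, and likewise $ua = ca + ka$ with $ka \in K$. Hence the condition ``$ac, ca \in U$ for all $c \in C$'' is equivalent to ``$au, ua \in U$ for all $u \in U$'', that is, to $a \in \nser_A(U)$. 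Thus $\nser_{\bar A}(\bar C)$ is exactly the image in $\bar A$ of $\nser_A(U)$.

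It then remains to invoke Lemma \ref{lem-cart} with the subalgebra $U \supseteq C$: it yields $\rnser_A(U) = U$. Since $\nser_A(U) \subseteq \rnser_A(U)$ always holds and $U \subseteq \nser_A(U)$ because $U$ is a subalgebra, we conclude $\nser_A(U) = U$. Pushing this down through the quotient map gives $\nser_{\bar A}(\bar C) = U/K = \bar C$, and together with nilpotency this shows $\bar C$ is a Cartan subalgebra of $A/K$. The only delicate point is the reduction of the normaliser condition across the quotient; everything there hinges on $K$ being a two-sided ideal, so that the error terms $ak$ and $ka$ land in $K \subseteq U$, which both lets me test the condition on representatives $c \in C$ and lets me transfer the self-normalising property from $A$ to $A/K$.
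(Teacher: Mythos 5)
Your proof is correct and follows essentially the same route as the paper: nilpotency of $(C+K)/K$ as a quotient of $C$, plus Lemma \ref{lem-cart} applied to the subalgebra $U = C+K$ to get self-normalisation. The paper's one-line proof leaves implicit exactly the details you spell out (passing from $\rnser_A(U)=U$ to $\nser_A(U)=U$, and transferring the normaliser condition across the quotient by $K$), so yours is just a fully expanded version of the same argument.
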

\begin{proof} $C+K/K$ is nilpotent and, by Lemma \ref{lem-cart}, $\nser_A(C+K) = C+K$.
\end{proof} 

\begin{lemma}\label{lem-minE} Let $A$ be a Leibniz algebra of dimension $n$ over the field $F$ of at least $n+1$ elements.  Let $U$ be a subalgebra of $A$ and suppose that $E = E_A(u_0)$ is minimal in the set $\{E_A(u) \mid u \in U\}$.  Suppose $E \supseteq U$.  Then $E_A(u) \supseteq U$ for all $u \in U$. \end{lemma}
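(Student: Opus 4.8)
The plan is to carry out the classical ``generic element'' argument for minimal Engel subalgebras, here applied to $A$ viewed as a space acted on by the left multiplications. Write $d = \dim E$. First I would record the set-up: since $E_A(u_0)$ belongs to $\{E_A(u) \mid u \in U\}$ we have $u_0 \in U \subseteq E$, so for every $w \in U$ the operator $L_w$ maps $E$ into $E$ (as $w \in E$ and $E$ is a subalgebra) and hence induces an action on the quotient $A/E$. Because $E = E_A(u_0)$ is the Fitting null component of $L_{u_0}$, the operator $L_{u_0}$ is nilpotent on $E$ and invertible on $A/E$. Now fix an arbitrary $u \in U$; the goal $E_A(u) \supseteq U$ will follow once I show that $L_u$ is nilpotent on $E$, since then $E \subseteq E_A(u)$ and in particular $U \subseteq E_A(u)$.

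The next step is to vary along the line $w_\lambda = u_0 + \lambda u \in U$, $\lambda \in F$. Since $L_{w_\lambda} = L_{u_0} + \lambda L_u$ depends linearly on $\lambda$, the determinant $p(\lambda) = \det(L_{w_\lambda}|_{A/E})$ is a polynomial in $\lambda$ of degree at most $n-d$, and $p(0) = \det(L_{u_0}|_{A/E}) \ne 0$. Hence $p(\lambda) \ne 0$ for all but at most $n-d$ scalars $\lambda$. For each such ``good'' $\lambda$, $L_{w_\lambda}$ is invertible on $A/E$; since $E_A(w_\lambda)$ maps under $A \to A/E$ to a subspace on which $L_{w_\lambda}$ is simultaneously nilpotent and invertible, this forces $E_A(w_\lambda) \subseteq E$. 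The minimality of $E = E_A(u_0)$ in $\{E_A(v) \mid v \in U\}$ then upgrades this to $E_A(w_\lambda) = E$, so that $L_{w_\lambda}$ is nilpotent on $E$.

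To finish I would translate nilpotency at the good scalars into nilpotency of $L_u$ itself. Write the characteristic polynomial of $L_{w_\lambda}|_E = (L_{u_0} + \lambda L_u)|_E$ as $X^d + \sum_{i=1}^d c_i(\lambda) X^{d-i}$; each $c_i$ is a polynomial in $\lambda$ of degree at most $i$, and the coefficient of $\lambda^i$ in $c_i(\lambda)$ is, up to sign, the $i$-th coefficient of the characteristic polynomial of $L_u|_E$. The previous paragraph shows $c_i(\lambda) = 0$ at every good $\lambda$, of which there are at least $(n+1) - (n-d) = d+1$ by the hypothesis $|F| \ge n+1$. Since $d+1$ exceeds $\deg c_i \le i \le d$, each $c_i$ vanishes identically; reading off its $\lambda^i$-coefficient shows every coefficient of the characteristic polynomial of $L_u|_E$ vanishes, so that polynomial is $X^d$ and $L_u$ is nilpotent on $E$. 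This gives $U \subseteq E \subseteq E_A(u)$, as required.

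The main obstacle is the final counting step, which is exactly where the bound $|F| \ge n+1$ is consumed: I must make the degrees line up so that the good scalars outnumber the degree of each $c_i$, forcing the $c_i$ to vanish as polynomials rather than merely at finitely many points. The two degree bounds that must be tracked carefully are $\deg p \le n - d$ (controlling how many scalars are lost) and $\deg c_i \le i \le d$ (controlling how many good scalars are needed); their sum being at most $n$ is what the hypothesis $|F| \ge n+1$ is tailored to beat. A secondary point needing care is the passage from invertibility of $L_{w_\lambda}$ on $A/E$ to the inclusion $E_A(w_\lambda) \subseteq E$, together with the use of minimality to promote this inclusion to an equality.
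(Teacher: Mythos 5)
Your proof is correct and is essentially the paper's own generic-element argument: both factor the characteristic polynomial of $L_{u_0+\lambda u}$ through the invariant subspace $E$ and the quotient $A/E$, use nonvanishing of the determinant on $A/E$ at all but at most $n-d$ scalars together with minimality of $E$ to force $L_{u_0+\lambda u}$ to be nilpotent on $E$ at $d+1$ values of $\lambda$, and then kill the coefficients $c_i(\lambda)$ (the paper's $\alpha_i(t)$) by comparing their degrees ($\le i \le d$) with the number of roots. The only difference is cosmetic: the paper runs the line in the direction $u_1 = u - u_0$ and specializes at $t=1$ to get $E_A(u) \supseteq E$, while you run it in the direction $u$ and instead extract the leading $\lambda^i$-coefficient of the identity $c_i \equiv 0$ to conclude that the characteristic polynomial of $L_u|_E$ is $X^d$; both finishes are valid.
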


\begin{proof} We consider $A$ as a module for the algebra generated by the $L_u$.  Since $E$ is a subalgebra of $A$ and $U \subseteq E$, $E$ is a submodule.  Take any $u_1 \in U$ and let $\theta(x,t), \phi(x,t)$ and $\psi(x,t)$ be the characteristic polynomials in the indeterminate $x$ of $L_{u_o + t u_1}$ on $A, E$ and $A/E$ respectively.  Then
\begin{equation*}\begin{split}
\theta(x,t) &= \phi(x,t) \psi(x,t),\\
\phi(x,t) &= x^r + \alpha_1(t)x^{r-1} + \dots + \alpha_r(t),\\
\psi(x,t) &= x^{n-r} + \beta_1(t)x^{n-r-1} + \dots + \beta_{n-r}(t),\\  
\end{split}\end{equation*}
where $r= \dim(E)$ and $\alpha_i(t), \beta_i(t)$ are polynomials in $t$ of degree at most $i$.  We prove that $\alpha_i = 0$ for all $i$.

Since $0$ is not an eigenvalue of $L_{u_0}$ on $A/E$, $\beta_{n-r}(0) \ne 0$.  Thus $\beta_{n-r}(t)$ is not the zero polynomial.  Since $\beta_{n-r}(t)$ has at most $n-r$ roots in $F$, there exist $r+1$ distinct elements $t_1, \dots, t_{r+1} \in F$ such that $\beta_{n-r}(t_j) \ne 0$.  But $\beta_{n-r}(t_j) \ne 0$ implies that $E_A(u_0 + t_j u_1) \subseteq E$.  Therefore $E_A(u_0 + t_j u_1) = E$ by the minimality of $E$.  But this implies $\phi(x, t_j) = x^r$, that is, $\alpha_i(t_j) = 0$ for all $j$.  Since $\alpha_i(t)$ has degree at most $i < r+1$, $\alpha_i(t)$ is the zero polynomial.  Thus $E_A(u_0 + tu_1) \supseteq E$ for all $u_1 \in U$ and all $t \in F$.  Given $u \in U$, put $u_1 = u_0 - u$.  Then $E_A(u) = E_A(u_0+u_1) \supseteq E$.
\end{proof}

\begin{theorem}\label{th-minE}  Let $A$ be a Leibniz algebra of dimension $n$ over the field $F$ of at least $n+1$ elements.  Let $U$ be a subalgebra of $A$.  Then $U$ is a Cartan subalgebra of $A$ if and only if $U$ is minimal in the set of Engel subalgebras of $A$. \end{theorem}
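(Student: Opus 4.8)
The plan is to treat the two implications separately, leaning on the results already assembled in this section.

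For the implication ``minimal Engel $\Rightarrow$ Cartan'', I would start from a minimal Engel subalgebra $U = E_A(a)$ and use Lemma \ref{lem-inE} to arrange $a \in U$. Since $U$ is minimal among \emph{all} Engel subalgebras, it is a fortiori minimal in $\{E_A(u) \mid u \in U\}$, and $U \subseteq E_A(a) = U$ holds trivially, so Lemma \ref{lem-minE} applies and yields $U \subseteq E_A(u)$ for every $u \in U$. As $U$ is a subalgebra, this says each $L_u$ acts nilpotently on $U$, so Theorem \ref{th-absEng}, applied to the Leibniz algebra $U$, makes $U$ nilpotent. To finish I would invoke Lemma \ref{selfN} (with $E_A(a) = U$) to get $\rnser_A(U) = U$, and then note $\nser_A(U) \subseteq \rnser_A(U) = U \subseteq \nser_A(U)$, so $\nser_A(U) = U$ and $U$ is Cartan.

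For the implication ``Cartan $\Rightarrow$ minimal Engel'', let $C = U$ be Cartan. Nilpotency of $C$ gives $C \subseteq E_A(c)$ for all $c \in C$, and I would quote the opening step of the proof of Lemma \ref{lem-cart}, namely $\bigcap_{c \in C} E_A(c) = C$ (this is the place where Theorem \ref{th-linEng} and the equality $\nser_A(C) = C$ are consumed). Next I would choose $u_0 \in C$ making $E := E_A(u_0)$ minimal in $\{E_A(u) \mid u \in C\}$; then $C \subseteq E$. Here I would use the sharper output of Lemma \ref{lem-minE}'s proof, that $E \subseteq E_A(u)$ for all $u \in C$, to deduce $E \subseteq \bigcap_{c \in C} E_A(c) = C$, hence $C = E_A(u_0)$ is itself an Engel subalgebra. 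Minimality is then easy: if $E_A(b) \subseteq C$, then (using Lemma \ref{lem-inE} to put $b \in E_A(b) \subseteq C$) nilpotency of $C$ forces $C \subseteq E_A(b)$, so $E_A(b) = C$, and no Engel subalgebra sits strictly inside $C$.

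I expect the only genuine obstacle to be the step realising the intersection $\bigcap_{c \in C} E_A(c) = C$ as a single Engel subalgebra in the second implication. This is exactly where the hypothesis $|F| \ge n+1$ is needed, via Lemma \ref{lem-minE}, and it forces me to use the containment $E \subseteq E_A(u)$ established inside the proof of Lemma \ref{lem-minE} rather than the weaker containment $U \subseteq E_A(u)$ recorded in its statement. Everything else is routine manipulation of the normaliser inclusions together with Engel's theorem.
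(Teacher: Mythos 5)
Your proposal is correct and follows essentially the same route as the paper: both directions use the same chain of lemmas, including the key step of choosing $u_0 \in U$ with $E_A(u_0)$ minimal in $\{E_A(u) \mid u \in U\}$ and invoking the sharper conclusion $E \subseteq E_A(u)$ established inside the proof of Lemma \ref{lem-minE} (the paper quotes the lemma loosely but uses exactly this strengthened form). The only differences are cosmetic: in the Cartan-implies-minimal direction you cite the identity $\bigcap_{c \in C} E_A(c) = C$ from the proof of Lemma \ref{lem-cart} where the paper re-runs that same application of Theorem \ref{th-linEng} inline on $E/U$ to get a normaliser contradiction, and you make explicit the final minimality check that the paper leaves implicit.
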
 

\begin{proof}  Suppose $U = E_A(u_0)$ is minimal Engel in $A$.  By Lemma \ref{lem-inE}, we may supose that $u_0 \in U$.  Then $\rnser_A(U) = U$ by Lemma \ref{selfN}.  For any $u \in U$, $E_A(u) \supseteq U$ by Lemma \ref{lem-minE}. Thus $L_u| U \to U$ is nilpotent for all $u \in U$, $U$ is nilpotent by Theorem \ref{th-absEng}, and $U$ is a Cartan subalgebra.

Suppose conversely, that $U$ is a Cartan subalgebra of $A$.  For any $u \in U$, we have $E_A(u) \supseteq U$.  We have to show that there exists $u_0 \in U$ such that $E_A(u_0) = U$.  Take $u_0 \in U$ such that $E = E_A(u_0)$ is minimal in the set $\{E_A(u) \mid u \in U\}$.   By Lemma \ref{lem-minE}, $E_A(u) \supseteq E$ for all $u \in U$.  Thus all $L_u$ act nilpotently on $E/U$.  If $E \ne U$, by Theorem \ref{th-linEng}  there exists  $\bar{e} \in E/U$, $\bar{e} \ne 0$, such that $L_u(\bar{e}) = 0$ for all $u \in U$, that is, $e \not\in U$ such that $e \in \rnser_A(U)$. 
\end{proof}

A subalgebra $U$ of a Lie algebra $L$ is said to be  intravariant in $L$ if every derivation of $L$ is the sum of an inner derivation and a derivation which stabilises $U$.  This definition seems inappropriate for Leibniz algebras as it deals only with left actions on the algebra, ignoring right actions. However, Cartan subalgebras of Leibniz algebras have a property which for Lie algebras, by Barnes \cite[Lemma 1.2]{Frat}, is equivalent to intravariance.

\begin{theorem} Let $N$ be an ideal of the Leibniz algebra $A$ and let $C$ be a Cartan subalgebra of $N$.  Then $N + \nser_A(C) = A$.\end{theorem}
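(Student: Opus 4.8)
The plan is to prove this by a Frattini-type argument, carried out by induction on $\dim A$ (with a secondary induction on $\dim N$ for fixed $A$). The statement is trivial when $N=A$, and when $N$ is nilpotent it is immediate: then $C=N$, and since $N\ideq A$ we have $\nser_A(N)=A$, so $N+\nser_A(C)=A$. In particular the case $N=0$ is covered. I would therefore assume $N$ is a proper, non-nilpotent ideal and choose a minimal ideal $K$ of $A$ with $K\subseteq N$ (such a $K$ exists because $N\neq 0$).

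The engine of the induction is a passage to $A/K$. By Corollary \ref{cor-cart} (applied inside $N$), $(C+K)/K$ is a Cartan subalgebra of $N/K$, so by induction $A/K=(N/K)+\nser_{A/K}((C+K)/K)$. Lifting through $K$ and noting that the preimage of $\nser_{A/K}((C+K)/K)$ is exactly $\nser_A(C+K)$, I obtain $A=N+\nser_A(C+K)$. Set $H=\nser_A(C+K)$. By construction $C+K$ is a two-sided ideal of $H$, and $C$ is a Cartan subalgebra of $C+K$ (it is nilpotent and $\nser_{C+K}(C)\subseteq\nser_N(C)=C$). If $H\neq A$, the inductive hypothesis applied to the triple $(H,\,C+K,\,C)$ gives $H=(C+K)+\nser_H(C)\subseteq N+\nser_A(C)$, which together with $A=N+H$ finishes the case. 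If $H=A$ but $C+K\neq N$, I apply the statement to the strictly smaller ideal $C+K$ of $A$ (the secondary induction on $\dim N$), again landing inside $N+\nser_A(C)$. What remains is the single irreducible configuration: $C+K=N\ideq A$ with $K$ a minimal ideal of $A$ and $N/K$ nilpotent.

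For this crux I would form the Fitting decomposition $N=C\oplus N_1$ of $N$ regarded as a module for the nilpotent Lie algebra $\{L_c\mid c\in C\}$. Its null component is $C$ itself, by the identity $\bigcap_{c\in C}E_N(c)=C$ established inside the proof of Lemma \ref{lem-cart}, and on the one-component one has $CN_1=N_1$ together with no trivial composition factors (Theorem \ref{th-linEng}). Since $C+K=N$ and $N_1\cap C=0$, projecting $K$ onto $N_1$ forces $N_1\subseteq K$. If $N_1=0$ then $N=C$ is nilpotent and we are done, so assume $N_1\neq 0$. Given $a\in A$, I would look for an adjusting element $n\in N_1$ with $a-n\in\nser_A(C)$, i.e. with $a-n$ carrying $C$ into $C$ under \emph{both} multiplications. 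The left-hand requirement amounts to correcting the defect $c\mapsto\pi(ac)$, where $\pi$ is the projection of $N\to N_1$, by the coboundary $c\mapsto\pi(nc)$; this is solvable precisely because $C$ acts on its Fitting one-component $N_1$ without trivial composition factors. When $K$ is abelian the same adjustment is furnished geometrically by the automorphisms $\alpha_k=1+L_k$ ($k\in K$), which fix $K$ pointwise, stabilise $N$, and permute its Cartan subalgebras, in conjunction with the conjugacy statement of Lemma \ref{lem-single}.

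The hard part will be the two-sidedness inherent in the Leibniz normaliser: a single element $n$ must simultaneously annul the left defect $\pi(ac)$ and the right defect $\pi(ca)$, whereas the classical Lie-algebra Frattini argument only ever corrects one side. I expect to overcome this by relating $nc$ and $cn$ through the fact that $N/\lcent(N)$ is a Lie algebra, so that the two defects differ only by a left-centre term that $\pi$ absorbs (cf. the identification $\soc=\lcent$ in Lemma \ref{lem-primsoc}), and by splitting off the non-abelian minimal-ideal case, where $K^2=K$ forces $\lcent(A)\cap K=0$ and $K$ to be of Lie type, so that the classical conjugacy argument applies verbatim. Reconciling the left and right coboundary conditions with one common $n$ is the step I anticipate requiring the most care.
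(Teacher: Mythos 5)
Your inductive scaffolding (minimal ideal $K\subseteq N$, passage to $A/K$ via Corollary \ref{cor-cart}, the triple $(H,\,C+K,\,C)$, and the secondary induction on $\dim N$) is sound as far as it goes, but it terminates exactly where the theorem's real content begins, and that content is never supplied. In the crux case $N=C+K$ you must produce, for each $a\in A$, a \emph{single} element $n$ with both $(a-n)c\in C$ and $c(a-n)\in C$ for all $c\in C$, and you concede that this two-sided matching is open (``I expect\dots'', ``I anticipate\dots''). Neither of your sketched repairs closes it. The left-centre relation only gives $ac+ca\in\lcent(A)$, and there is no reason for the projection $\pi$ onto the Fitting one-component $N_1$ to kill $\lcent(A)\cap N$: the left centre of $N$ need not lie in $C$, so the left and right defects are not ``reconciled up to a term $\pi$ absorbs.'' The conjugacy route is likewise unavailable: Lemma \ref{lem-single} is a statement about primitive algebras and does not apply here, and a conjugacy theorem for Cartan subalgebras of $N$ over an arbitrary field of arbitrary characteristic is not among the paper's tools. (Even your one-sided claim, that the left defect can always be corrected because $N_1$ has no trivial composition factors, is an unproven cohomological assertion as stated.) So what you have is a reduction plus an unproven crux, not a proof.

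The idea you are missing is that Theorem \ref{th-linEng} (Patsourakos' two-sided Engel theorem) is precisely the device that handles left and right simultaneously, and it yields the theorem \emph{directly}, with no induction at all. Set $E=\bigcap_{c\in C}E_A(c)$, the Fitting null component of $A$ under the nilpotent Lie algebra $L_C$; it is a subalgebra of $A$ containing $C$. For $c\in C$ and $e\in E$ we have $ce,\,ec\in E\cap N$ (since $E$ is a subalgebra and $N$ an ideal). Now $E\cap N$ is a $C$-bimodule containing $C$ as a sub-bimodule, and every $L_c$ acts nilpotently on $(E\cap N)/C$; if $E\cap N\neq C$, Theorem \ref{th-linEng} produces a nonzero coset annihilated by all $L_c$ \emph{and} all $R_c$, i.e.\ an element of $\nser_N(C)$ outside $C$, contradicting the definition of a Cartan subalgebra. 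Hence $E\cap N=C$, so $ce,ec\in C$ and $E\subseteq\nser_A(C)$. On the Fitting one-component $K'$ of $A$ there are no trivial $L_C$-composition factors, so $K'=\sum_{c\in C}L_c(K')\subseteq L_CA\subseteq N$, because $C\subseteq N\ideq A$. Therefore $A=E+K'\subseteq\nser_A(C)+N$. The simultaneous left/right normalisation that stalls your cocycle approach is exactly what the bimodule form of Engel's theorem delivers for free; this is the paper's argument.
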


\begin{proof}  Let $E = \bigcap\{E_A(c) \mid c \in C\}$.  Then $E\supseteq C$ is a subalgebra of $A$.   By Lemma \ref{selfN}, $\rnser_N(C) = C$.   But  the Lie algebra $L_C$ acts nilpotently on $E \cap N$.  Therefore $E \cap N = C$.  However, for all $c \in C$ and $e \in E$, we have $ce, ec \in E \cap N$.  Thus $E = \nser_A(C)$.  As $L_C$-module, $A$ is (uniquely) the direct sum of a submodule on which $L_C$ acts nilpotently (in this case, $E$) and a submodule $K$ with no trivial composition factors.  Since $L_CA \subseteq N$, we have $K \subseteq N$ and $E + N = A$. \end{proof}

\bibliographystyle{amsplain}

\end{document}